\newtheorem{definition}{Definition}
\newtheorem{proposition}{Proposition}
\newtheorem*{proposition*}{Proposition}
\newtheorem{corollary}{Corollary}
\newtheorem{theorem}{Theorem}
\newtheorem{lemma}{Lemma}
\newtheorem{remark}{Remark}
\newtheorem{observation}{Observation}
\newtheorem{question}{Question}
\DeclareMathOperator{\const}{const}
\begin{document}
\title{Growth of bilinear maps II: Bounds and orders}
\author{Vuong Bui\thanks{LIRMM, Universit\'e de Montpellier, CNRS and UET, Vietnam National University, Hanoi (\texttt{bui.vuong@yandex.ru})\\
Part of the work was supported by the Deutsche 
Forschungsgemeinschaft (DFG) Graduiertenkolleg ``Facets of Complexity'' 
(GRK 2434) at
Institut f\"ur Informatik, 
Freie Universit\"{a}t Berlin.}}
\date{}
\maketitle

\begin{abstract}
	A good range of problems on trees can be described by the following general setting: Given a bilinear map $*:\mathbb R^d\times\mathbb R^d\to\mathbb R^d$ and a vector $s\in\mathbb R^d$, we need to estimate the largest possible absolute value $g(n)$ of an entry over all vectors obtained from applying $n-1$ applications of $*$ to $n$ instances of $s$. When the coefficients of $*$ are nonnegative and the entries of $s$ are positive, the value $g(n)$ is known to follow a growth rate $\lambda=\lim_{n\to\infty} \sqrt[n]{g(n)}$. In this article, we prove that for such $*$ and $s$ there exist nonnegative numbers $r,r'$ and positive numbers $a,a'$ so that for every $n$,
	\[
		  a n^{-r}\lambda^n\le g(n)\le a' n^{r'}\lambda^n.
	\]

	While proving the upper bound, we actually also provide another approach in proving the limit $\lambda$ itself. The lower bound is proved by showing a certain form of submultiplicativity for $g(n)$. Corollaries include a lower bound and an upper bound for $\lambda$, which are followed by a good estimation of $\lambda$ when we have the value of $g(n)$ for an $n$ large enough. 
\end{abstract}

\section{Introduction}
Consider a rooted binary tree $T$, a pruned tree of $T$ is a tree obtained from $T$ by removing zero or more subtrees. Let $f(n)$ be the maximum number of pruned trees of a tree $T$ with $n$ leaves. The function $f(n)$ can be defined recursively by $f(1)=1$ and for $n\ge 2$,
\begin{equation}\label{eq:simple-g(n)}
	f(n)=1+\max_{1\le m\le n-1} f(m)f(n-m).
\end{equation}

This function was investigated for a different purpose in \cite{de2012maximum} where the growth rate $\lambda=\lim_{n\to\infty} \sqrt[n]{f(n)}$ was shown to be $1.502836801\dots$. It uses a pure combinatorial argument to show that the growth rate is the same as the rate $\lim_{m\to\infty} (a_m)^{1/2^m}$ of the double exponential sequence $a_m$ where $a_0=1$ and $a_{m}=1+a_{m-1}^2$ for $m\ge 1$. Actually, the closed-form expression of $\lambda$ is
\begin{equation}\label{eq:complicated-lambda}
	\lambda = \exp\left(\sum_{i\ge 1} \frac{1}{2^i} \log\left(1+\frac{1}{a_i^2}\right)\right).
\end{equation}

If we let $s=(1,1)$ and $*:\mathbb R^2\times\mathbb R^2\to \mathbb R^2$ be presented by $x*y=(x_1y_1+x_2y_2, x_2y_2)$ (with $x_i$ denoting the $i$-th entry of a vector $x$), and $g(n)$ be the largest possible entry of a combination of $n$ instances of $s$ using $n-1$ applications of $*$, then $f(n)$ and $g(n)$ are identical. Although this language of bilinear maps looks more complicated, it has more applications. The problem of studying $g(n)$ where $s$ is any nonnegative vector in $\mathbb R^d$ and $*$ is any bilinear operator in $\mathbb R^d\times\mathbb R^d\to\mathbb R^d$ with nonnegative coefficients was first introduced by Rote in \cite{rote2019maximum}. The original motivation is the maximum number of minimal dominating sets in a tree of $n$ leaves. This number is actually of the same order as $g(n)$ for vector $s=(0, 1, 0, 0, 0, 1)$ and the operator $*$ so that
\begin{equation}\label{eq:dominating-sets}
	\begin{pmatrix}
		x_1\\x_2\\x_3\\x_4\\x_5\\x_6
	\end{pmatrix}
	*
	\begin{pmatrix}
		y_1\\y_2\\y_3\\y_4\\y_5\\y_6
	\end{pmatrix}
	=
	\begin{pmatrix}
		x_1y_1 + x_1y_4 + x_1y_6 + x_2y_6 + x_3y_6\\
		x_2y_4\\
		x_2y_1 + x_3y_1 + x_3y_4\\
		x_4y_1 + x_4y_2 + x_4y_4 + x_4y_5 + x_6y_1 + x_6y_2\\
		x_5y_4 + x_5y_5 + x_6y_3\\
		x_6y_4 + x_6y_5
	\end{pmatrix}.
\end{equation}
The relation between minimal dominating sets and the setting of $*,s$, which is not obvious, is explained in detail in the original source \cite{rote2019maximum} by dynamic programming.

As the argument for the value of the growth rate $\lambda$ in the first example, which spans some $4$ pages, is already not so trivial, we may expect a different approach than elementary techniques for the second example, which looks fairly more complicated. By considering a sequence of combinations that follows a pattern with a computable growth rate $\lambda_0$, we obtain the lower bound $\lambda\ge \lambda_0$. In particular, the pattern in \cite{rote2019maximum} for the second example represents optimal trees with a beautiful structure composed of snowflakes. However, the harder part is in verifying the upper bound $\lambda\le\lambda_0$ for a given $\lambda_0$. One can observe that if we divide $s$ by some $\lambda_0\ge \lambda$ then the growth rate becomes at most $1$. In principle, this does not imply that the new $g(n)$ is bounded (e.g. the growth rate of a polynomial is still $1$). However, the interesting point here is that for this particular case the boundedness of the new $g(n)$ is ensured, as stated in \cite[Proposition $5.1$]{rote2019maximum} as follows.
\begin{proposition*}[Rote 2019]
	The growth rate $\lambda$ for $s$ and $*$ in \eqref{eq:dominating-sets} is the \emph{smallest} $\lambda_0$ so that all the vectors obtained by combining any number of instances of $s/\lambda_0$ using applications of $*$ are bounded.
\end{proposition*}

One of the merits of the approach in \cite{rote2019maximum} is the use of computers to assist the proof with a polytope $P$ so that $s/\lambda_0\in P$ and $P * P\subseteq P$. One can construct $P$ by starting with a set of a single point $s/\lambda_0$ and subsequently applying $*$ to all pairs of the points $x,y$ (not necessarily distinct) in the set and add the results $x*y$ to the set. In fact, we only need to maintain the vertices of the convex hull. We are done when we have certain form of convergence, e.g. something satisfying $P*P\subseteq P$. Details can be found in the original article, with certain delicate care of numerical operations as well. Although the expression of the second example looks complicated, the growth rate is simple as
\[
	\lambda=\sqrt[13]{95}.
\]

The approach was applied by Rosenfeld to address other problems of the number of different types of dominating sets, perfect codes, different types of matchings, and maximal irredundant sets in a tree. We suggest the readers to check \cite[Section 5]{rosenfeld2021growth} for this rich set of applications. One can find an application in graphs other than trees in \cite{de2012maximum} where the maximum number of cycles in an outerplanar graph is studied using function $f(n)$ in \eqref{eq:simple-g(n)}. We believe the flexibility of the setting allows applications in a more remote field.

One of the drawbacks of the polytope method is that it works only for the functions that satisfy $g(n)\le\const \lambda^n$, which does not hold in general. Moreover, the precision in calculating may be tricky, especially if $\lambda$ has a complicated nature as in \eqref{eq:complicated-lambda}. (Note that the precise values of the growth rates in \cite{rosenfeld2021growth} are all algebraic.)

In this article, we narrow the scope of the problem to positive vectors $s$ and operators $*$ with nonnegative coefficients only, since it was shown in \cite{bui2021growth} that the limit is always guaranteed:
\[
	\lambda=\lim_{n\to\infty} \sqrt[n]{g(n)}.
\]
When the requirements do not meet, there is chance that the limit does not hold (also see \cite{bui2021growth}).

The main result of the article is the following asymptotic description of $g(n)$, which is better than the mere limit, and differs from $\lambda^n$ by some polynomials. 
\begin{theorem}\label{thm:main}
	There are some nonnegative numbers $r,r'$ and positive numbers $a,a'$ so that for every $n$,
	\[
		a n^{-r}\lambda^n\le g(n)\le a' n^{r'}\lambda^n.
	\]
\end{theorem}

Throughout the work, the notation $\const$ means that we can put some positive constant in that place, which is independent of $n$. Two instances of $\const$ may present two different constants.

Note that we can assume $r,r'$ to be nonnegative since negative values can always be replaced by nonnegative ones. Actually, we can just assign to $r,r'$ the same number, say their maximum value. As the degrees are denoted differently, the readers may guess that the theorem is proved in two parts. The upper bound is simpler and proved first in Section \ref{sec:upper-bound} while the lower bound is proved in Section \ref{sec:lower-bound}. During the course of proving the upper bound, we also give a stand-alone proof for the limit $\lambda=\lim_{n\to\infty} \sqrt[n]{g(n)}$. This proof has a quite different approach than the proof in \cite{bui2021growth} though both share similar ideas. At any rate, the former gives more insights than the latter at roughly the same complexity of arguments. 

It follows from Theorem \ref{thm:main} a theoretical bound of $\lambda$ as follows. 
\begin{corollary}\label{cor:estimating-lambda}
	For any $n$, we have
	\[
		\sqrt[n]{\frac{1}{a'} n^{-r'} g(n)} \le \lambda \le \sqrt[n]{\frac{1}{a} n^r g(n)}.
	\]
\end{corollary}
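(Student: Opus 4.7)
The corollary is a direct algebraic consequence of Theorem \ref{thm:main}, so my plan is simply to solve each of the two inequalities for $\lambda$ and take $n$-th roots. No auxiliary construction should be needed: all the work is already packaged into the constants $a, a', r, r'$ guaranteed by the main theorem.

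First I would start from the upper bound on $g(n)$, namely $g(n)\le a' n^{r'}\lambda^n$. Rearranging gives $\lambda^n\ge \frac{1}{a'}n^{-r'}g(n)$, and since $\lambda\ge 0$ (in fact $\lambda\ge 1$ in all nontrivial situations), taking $n$-th roots yields the left-hand inequality $\lambda\ge\sqrt[n]{\frac{1}{a'}n^{-r'}g(n)}$. The only thing to check is that the quantity under the root is nonnegative, which is automatic because $g(n)\ge 0$.

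Symmetrically, starting from the lower bound $g(n)\ge a n^{-r}\lambda^n$, I would rearrange to get $\lambda^n\le\frac{1}{a}n^r g(n)$ and take $n$-th roots to obtain $\lambda\le\sqrt[n]{\frac{1}{a}n^r g(n)}$. Combining the two inequalities gives the claim.

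I do not anticipate any real obstacle here; the corollary is essentially a re-packaging of Theorem \ref{thm:main}. The only subtlety worth a brief remark is that the estimate is nontrivial only when $n$ is large enough that $g(n)$ has been computed explicitly, and that the two-sided gap $\sqrt[n]{\frac{1}{a}n^r g(n)}/\sqrt[n]{\frac{1}{a'}n^{-r'}g(n)} = \sqrt[n]{\frac{a'}{a}n^{r+r'}}$ tends to $1$ as $n\to\infty$, which is the quantitative content advertised in the abstract.
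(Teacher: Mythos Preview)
Your proposal is correct and matches the paper's approach exactly: the paper does not even give a separate proof of this corollary, since it is an immediate algebraic rearrangement of the inequalities $a n^{-r}\lambda^n \le g(n) \le a' n^{r'}\lambda^n$ from Theorem~\ref{thm:main}.
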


As the readers can check the proof of Theorem \ref{thm:main}, the constants $a,a',r,r'$ are always computable from $*$ and $s$. Since the ratio $\sqrt[n]{(a'/a) n^{r+r'}}$ between the upper bound and the lower bound converges to $1$, we obtain a good bound when $n$ is large enough. However, the readers can see that this is merely of theoretical interest, since the constants $a,a',r,r'$ as constructed in the proof of Theorem \ref{thm:main} are so large while the exponential computation of $g(n)$ for large $n$ is not so feasible in general. On the other hand, the estimate can be seen to be not very bad, since the problem of approximating $\limsup_{n\to\infty} \sqrt[n]{g(n)}$ when $s$ is only nonnegative is known to be $NP$-hard \cite{rosenfeld2022undecidable} (the limit does not always exist in this case, so the limit superior is considered instead).

It makes sense to give an example where $g(n)$ is of order $n^r\lambda^n$ for some $r>0$. Let $s=(1,1)$ and $x*y=(x_1y_2 + x_2y_1, x_2y_2)$, then every combination of $n$ instances of $s$ gives the same result $(n,1)$, that is $g(n)=n$. A higher order can be obtained if we set $s=(1,1,1)$ and $x*y=(x_1y_2+x_2y_1, x_2y_2, x_1y_1)$, the largest third entry should be of order $n^2$ then. Other higher orders can be constructed correspondingly. The next order would be $n^4$ for $s=(1,1,1,1)$ and $x*y=(x_1y_2+x_2y_1, x_2y_2, x_1y_1,x_3y_3)$. Inspired by the construction, we would conjecture that 
\[
	g(n)\le\const n^{2^R}\lambda^n,
\]
where $R$ is the length of the longest chain in the partial order set of the components. (The readers can check the definition of this partial order set in Section \ref{sec:preliminary}, but for now we can say $R$ is less than the dimension, for a rough estimate.) The corresponding bound for nonnegative matrices $A$ is $\|A^n\|\le\const n^{R'}\lambda^n$ where $R'$ is the length of the similarly-defined partial order set of the components (see \cite{bui2022joint}). It is interesting when $R',2^R$ are linear and exponential, in corresponding to linear and bilinear maps.

Since for a matrix $A$ there exists a number $r$ so that $\const n^r {\rho(A)}^n\le \|A^n\|\le \const n^r {\rho(A)}^n$, where $\rho(A)$ denotes the spectral radius of $A$, it is natural to ask the following question.
\begin{question}\label{ques:same-power}
	Does there always exist a number $r$ so that
	\[
		\const n^r\lambda^n \le g(n) \le \const n^r\lambda^n?
	\]
\end{question}

The example of pruned trees in \eqref{eq:simple-g(n)} was confirmed to satisfy $\lambda^{n-\frac{1}{4}} < g(n) < \lambda^n$ for $n\ge 10$ in \cite[Theorem $2$]{de2012maximum}.
The example of minimal dominating sets in \eqref{eq:dominating-sets} also satisfies $\const\lambda^n \le g(n) \le \const\lambda^n$ in \cite[Theorem $1.1$]{rote2019maximum}. 

By Corollary \ref{cor:estimating-lambda}, given a value $\lambda_0$ other than $\lambda$ one can always decide whether $\lambda>\lambda_0$ or $\lambda<\lambda_0$ (regardless of complexity) since when $n$ is large enough, the value $\lambda_0$ will be to the left or to the right of the small interval containing $\lambda$. However, when $\lambda_0$ is given without being guaranteed to be different from $\lambda$, we have the following question.
\begin{question}
	Given any $\lambda_0$, is the problem checking if $\lambda\ne\lambda_0$ decidable?
\end{question}

When $s$ is allowed to be nonnegative instead of being strictly positive and $\lambda$ is defined to be $\limsup_{n\to\infty} \sqrt[n]{g(n)}$, Rosenfeld has given a negative answer to the question in \cite{rosenfeld2022undecidable}. A simpler argument is given in \cite{bui2022growth}. The original question for $s>0$ is still left open. In fact, it is interesting to treat the problem when there is no condition on the signs of the entries and the coefficients, and $g(n)$ is the largest possible norm of any vector obtained from combining $n$ instances of $s$ for some appropriate norm. Note that the growth rate is independent of the chosen norm as two norms are in a constant factor of each other.

Before proving Theorem \ref{thm:main}, we give some formal definitions in Section \ref{sec:preliminary}. There are also some results there, mostly brought from \cite{bui2021growth}. They are used in proving the lower bound in Section \ref{sec:lower-bound}. Note that the proof of the limit in Section \ref{sec:upper-bound} is stand-alone, and it does not use the results in Section \ref{sec:preliminary} but merely the definitions.

\section{Some definitions and preliminary results}
\label{sec:preliminary}
This section gives the formal definitions. Most of them can be also found in \cite{bui2021growth}. The readers are invited to check \cite{bui2021growth} for different perspectives and examples of the definitions.

We are given a positive vector $s\in\mathbb R^d$ and a map $*:\mathbb R^d\times\mathbb R^d\to\mathbb R^d$ defined by nonnegative coefficients $c^{(k)}_{i,j}$ so that
\[
    (x*y)_k = \sum_{i,j} c^{(k)}_{i,j} x_i y_j
\]
for any $k$ and any vectors $x,y$. Note that throughout the article we always denote by $x_i$ the $i$-th entry of a vector $x$.

We denote by $A_n$ the set of all the results obtained by applying $n-1$ applications of $*$ to $n$ instances of $s$, that is: $A_1=\{s\}$ and for $n\ge 2$,
\[
	A_n=\bigcup_{1\le m\le n-1} \{x*y: x\in A_m, y\in A_{n-m}\}.
\]

The largest entry $g(n)$ over all the resulting vectors can be expressed as
\[
	g(n)=\max \{v_i: v\in A_n, 1\le i\le d\}.
\]

For convenience, we also denote by $g_k(n)$ the largest $k$-th entry over all the resulting vectors, that is
\[
	g_k(n)=\max \{v_k: v\in A_n\}.
\]
An obvious relation between $g(n)$ and $g_k(n)$ is $g(n)=\max_k g_k(n)$.

A pair $(*,s)$ is called a \emph{system} and the following limit $\lambda$ is called the \emph{growth rate of the system}
\[
	\lambda=\lim_{n\to\infty} \sqrt[n]{g(n)}.
\]

The limit was actually first proved in \cite{bui2021growth} by the notion of ``linear pattern'' (and will get proved again in Section \ref{sec:upper-bound} with another proof). Note that the limit $\lambda$ actually depends on the map and the vector of the considered system, but we do not denote it explicitly by $\lambda_{*,s}$ as they are known from context.

Before introducing linear patterns, we show a correspondence between the rooted binary trees of $n$ leaves and the combinations of $n$ instances of $s$. In one direction, we let the expression associated with a tree of a single leaf be the vector $s$ itself, and the expression associated with a tree of higher number of leaves be $L*R$, where $L,R$ are respectively the expressions associated with the left and right branches. The other direction is obvious as the previous association is a one-to-one mapping. Note that the map from the binary trees to the \emph{values} of the combinations is however not injective. Given any vector $v$ in $A_n$, we just pick any binary tree that gives $v$ to be the \emph{associated tree} with $v$. The arguments in this paper are independent of the choice. The other direction is deterministic: The vector that the binary tree gives is said to be the \emph{associated vector} with the tree. Note that from now on, all considered trees are rooted binary trees. In some places, we say the tree associated with $g_k(n)$ instead of saying the tree of $n$ leaves associated with a vector whose $k$-th entry is $g_k(n)$ for short. 

A \emph{linear pattern} $P=(T,\ell)$ is a pair of a tree $T$ and a marked leaf $\ell$ in $T$. Suppose in the expression associated with $T$, we put a vector variable $u$ instead of the fixed vector $s$ in the place associated with the leaf $\ell$. The value of the expression is then a vector variable $v$ linearly depending on $u$. Let $M=M(P)$ be the matrix representing the dependency, that is $v=Mu$. The matrix $M$ is said to be the \emph{associated matrix} with the linear pattern. A quick observation is that $M_{i,j}\le\const g_i(n)$ for any $i,j$ where $n$ is the number of leaves in $T$.

We would give an example of linear pattern from \cite{bui2021growth}: Let $s=(1,1)$ and $x*y=(x_1y_2 + x_2y_1, x_1y_2)$, then we have $g_1(n)=F_{n+1}$ and $g_2(n)=F_n$. The combinations that yield the Fibonacci numbers can be obtained from the linear pattern $P=(T,\ell)$ where $T$ has precisely two leafs with $\ell$ on the left. The golden ratio is actually the spectral radius of the matrix
\begin{equation*}
    M(P)=
    \begin{bmatrix}
        1 & 1\\
        1 & 0
    \end{bmatrix}
    .
\end{equation*}

The \emph{dependency graph} of a system, which expresses the dependency among the dimensions in $*$, is defined to be a directed graph that takes the dimensions as the vertices; there is an edge from $k$ to $i$ if there exists some $j$ so that either $c^{(k)}_{i,j}\ne 0$ or $c^{(k)}_{j,i}\ne 0$. As a directed graph, the dependency graph can be partitioned into strongly connected components, for which we call \emph{components} for short. We define a partial order between the components: A component $C_1$ is said to be less than a component $C_2$ for $C_1\ne C_2$ if there are vertices $i\in C_2$, $j\in C_1$ and a path from $i$ to $j$. For any $C_1,C_2$, we say $C_1\le C_2$ if $C_1<C_2$ or $C_1=C_2$.

\begin{proposition}
\label{prop:in-greatest-component}
	Suppose a component $C$ is greater than every other component, then $g_k(n)\ge\const g(n)$ for every $k\in C$.
\end{proposition}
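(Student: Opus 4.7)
The plan is to exploit the hypothesis on $C$: since $C$ is greater than every other component, from any $k\in C$ there is a directed path in the dependency graph to every coordinate in $\{1,\dots,d\}$, and such a path can be translated into a linear pattern that lifts a large entry at the endpoint into a large entry at $k$. This is the main mechanism behind the proof.

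Concretely, fix $k\in C$ and $n$, and pick $k'$ with $g_{k'}(n)=g(n)$. There is a directed path $k=k_0\to k_1\to\dots\to k_m=k'$ of length $m\le d-1$, and for each edge $k_i\to k_{i+1}$ some coefficient $c^{(k_i)}_{k_{i+1},j_i}$ or $c^{(k_i)}_{j_i,k_{i+1}}$ is strictly positive. Starting from a single marked leaf and iteratively attaching an $s$-leaf on the appropriate side, I would build a linear pattern $(T_P,\ell)$ with $m+1\le d$ leaves whose associated matrix $M$ satisfies $M_{k,k'}\ge c^{*}$ for a constant $c^{*}>0$ depending only on the system: each step contributes a factor of the form $c^{(k_i)}_{\cdot,\cdot}\, s_{j_i}>0$, and the minimum over the finitely many possible paths and endpoints is still positive. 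Plugging an optimal vector $v\in A_n$ with $v_{k'}=g(n)$ into the marked leaf yields a vector in $A_{n+m}$ whose $k$-th entry is at least $c^{*}\,g(n)$, giving
\[
    g_k(n+m)\ \ge\ c^{*}\,g(n).
\]

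What remains is to convert this into an unshifted bound $g_k(n)\ge\const\cdot g(n)$, which I would do by pairing it with a companion inequality $g(n)\le\const\cdot g(n-m)$ valid for $n$ large enough. For this, take an optimal tree for $g(n)$ and descend from the root into the subtree with more leaves until reaching a subtree $T'$ of size $m'\in[m,2m-1]$; view the surrounding tree as a linear pattern $P'$ of $n-m'+1$ leaves. The remark following the definition of linear patterns gives $M(P')_{i,j}\le\const\cdot g_i(n-m'+1)$, and combined with $g(m')\le g(2m-1)=\const$ this yields $g(n)\le\const\cdot g(n-m'+1)$; a short iteration then reduces the shift to the desired value. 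Combining with the chain bound and absorbing the finitely many small $n$ into the implicit constant produces $g_k(n)\ge\const\cdot g(n)$. The main obstacle I anticipate is exactly this last step: the chain construction itself is routine path-tracing in the dependency graph, but genuinely controlling the growth of $g$ over a bounded number of extra leaves requires the subtree-pruning lemma together with careful bookkeeping of the size ranges accessible via the descent, and this is where the real technical work lies.
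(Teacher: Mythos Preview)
Your approach is the natural one and is essentially what the cited result must do: path-tracing through the dependency graph (recorded in this paper as Observation~\ref{obs:pattern-for-distance}) yields $g_k(n+m)\ge c^{*}\,g(n)$ with a bounded offset $m$, and a shift inequality then removes the offset. The paper itself gives no argument here but defers to \cite[Lemma~5]{bui2021growth}, so there is no in-paper proof to compare against directly; your outline matches what that external lemma amounts to.

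There is, however, a small logical slip in how you combine the two pieces. The offset $m=m_n$ in your chain bound depends on which coordinate $k'$ realises $g(n)$, so from $g_k(n+m_n)\ge c^{*}g(n)$ together with the companion inequality $g(n)\le\const\,g(n-m)$ you only obtain $g_k(N)\ge\const\,g(N)$ for $N$ in the range $\{\,n+m_n:n\ge 1\,\}$; since $m_n$ can oscillate with the maximising coordinate, that set need not cover every integer. The remedy is immediate and uses the very pruning argument you describe: run it on the tree realising $g_k(n)$ (or $g_{k'}(n)$) rather than on the one realising $g(n)$. The same computation $(Mu)_k\le(\max_i u_i/s_i)\,(Ms)_k$ with a two-leaf subtree gives $g_k(n)\le\const\,g_k(n-1)$, hence $g_k(N)\ge\const\,g_k(N+m_N)\ge\const\,g(N)$ directly. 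This coordinate-wise shift bound is exactly Proposition~\ref{prop:g_k-not-very-different} in the present paper, so once the chain bound is established you may also simply invoke that proposition rather than rederive it.
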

\begin{proof}
	This is actually \cite[Lemma 5]{bui2021growth}.
\end{proof}
It means that $g_k(n)$ and $g(n)$ are in a constant factor of each other.

\begin{proposition}\label{prop:g_k-not-very-different}
	Given any vertex $k$, for any fixed $d>0$, the value $g_k(n)$ is at least a constant times $g_k(n+d)$.
\end{proposition}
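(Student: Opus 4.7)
The plan is to proceed by induction on the gap $d$, with the base case $d=0$ trivial. For the inductive step, assume the inequality $g_k(n'+d') \le C_{d'} g_k(n')$ for all $n'$ and all $d' < d$, and produce a constant $C_d$ for the gap $d$. By first restricting the bilinear map to the set of vertices reachable from $k$ in the dependency graph, we may assume without loss of generality that $k$ lies in the greatest component of the (restricted) system, so that Proposition~\ref{prop:in-greatest-component} applies and $g_k \asymp g$.

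Take a tree $T$ on $n+d$ leaves realizing $g_k(n+d)$. Descend from the root of $T$ along the heavy path (always into the child of greater leaf-count), stopping at the last node $v$ whose subtree $T_v$ still has at least $n$ leaves; then $|T_v| \in [n,\min(2n-1, n+d)]$. First consider the case $|T_v| \le n+d-1$. The complement of $T_v$ in $T$ forms a linear pattern $P$ with $|P| = n+d - |T_v| + 1 \in [2,d+1]$ leaves. Only finitely many such patterns exist, so the associated matrix $M = M(P)$ has entries uniformly bounded; explicitly, $M_{k,j} \le \const g_k(|P|) \le \const g_k(d+1)$ is a constant depending only on $k$ and $d$. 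Writing the root value of $T$ as $Mu$, where $u$ is the value at $v$, and bounding each $u_j \le g_j(|T_v|) \le g(|T_v|)$, then using Proposition~\ref{prop:in-greatest-component} to replace $g$ by $g_k$, we obtain $g_k(n+d) \le \const g_k(|T_v|)$. Since the gap $|T_v| - n$ is strictly less than $d$, the inductive hypothesis finishes this case.

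The remaining case occurs when the heavy path stops at the root itself, i.e.\ $|T_v| = n+d$: both children of the root have fewer than $n$ leaves, so $T = L * R$ with $|L|, |R| \in [d+1, n-1]$ and $|L| + |R| = n+d$. In this ``balanced'' case the naive bilinear bound $(L*R)_k \le \const g(|L|) g(|R|) \le \const g_k(|L|) g_k(|R|)$ is too weak by itself. The plan is to apply the inductive hypothesis to each factor with carefully chosen sub-gaps $d_L + d_R = d$, each strictly less than $d$, giving $g_k(|L|) \le \const g_k(|L|-d_L)$ and $g_k(|R|) \le \const g_k(|R|-d_R)$, and then to collapse the resulting product via a mild supermultiplicative estimate $g_k(a) g_k(b) \le \const g_k(a+b)$ into $\const g_k(|L|+|R|-d) = \const g_k(n)$. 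Such supermultiplicativity for $g_k$ (in the greatest component) follows from $A_{a+b} \supseteq A_a * A_b$ together with Proposition~\ref{prop:in-greatest-component}, by combining trees realizing $g_k(a)$ and $g_k(b)$ through $*$ and extracting a contribution proportional to their product.

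I expect the main obstacle to be the base case $d = 1$ of the induction, where the splitting trick above is unavailable: both $|L|, |R| \ge 2$ is possible, but the sub-gaps $d_L, d_R$ would need to be $< 1$, which is meaningless. Handling $d = 1$ in the balanced case likely requires a direct pruning argument (removing a single leaf from one of the children $L$ or $R$ and tracking the change in the $k$-th entry through the linear pattern at the root), using the positivity of $s$ and the structure of the dependency graph to keep the loss bounded by a constant factor depending only on $*$ and $s$.
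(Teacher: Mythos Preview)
Your Case~B relies on the claim that $g_k(a)\,g_k(b)\le\const g_k(a+b)$ once $k$ lies in the greatest component of the restricted system, and this claim is false. Take $s=(1,1)$ and $u*v=(u_1v_2+u_2v_1,\,u_2v_2)$, one of the paper's own running examples: every product of $n$ copies of $s$ equals $(n,1)$, so $g_1(n)=g(n)=n$, and vertex~$1$ is indeed the unique maximal component. Yet $g_1(a)g_1(b)=ab$ is not bounded by any constant times $g_1(a+b)=a+b$. Weak supermultiplicativity of $g_k$ is a genuine extra hypothesis (it is precisely the content of Theorem~\ref{thm:upper-bound-almost-connected}, which requires some $c^{(k)}_{i,j}>0$ with $i,j,k$ all in the same component), not a consequence of Proposition~\ref{prop:in-greatest-component}. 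Hence your induction step in the balanced case does not go through, and the difficulty you flag at $d=1$ is a symptom of this structural gap rather than an isolated boundary issue.

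The fix is much simpler than your scheme: instead of searching for a large subtree of size close to $n$, prune a \emph{small} one. Any binary tree on at least two leaves contains a subtree $T_0$ with exactly two leaves; applying the contraction argument of Lemma~\ref{lem:extension} (in the $C$-subsystem, where $g\asymp g_k$ by Proposition~\ref{prop:in-greatest-component}) with $m=2$ gives $g_k(N)\le\const g_k(N-1)$ for every $N\ge 2$. Iterating $d$ times yields $g_k(n+d)\le\const g_k(n)$ with a constant depending only on $d$, $*$, and $s$. This is exactly the route the paper indicates in the remark following Corollary~\ref{cor:extension-for-k} (the statement itself is simply cited from the companion paper), and it sidesteps both the balanced case and any appeal to supermultiplicativity.
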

\begin{proof}
	This is actually \cite[Corollary 1]{bui2021growth}.
\end{proof}
Note that however, the other direction of the inequality may not hold in some odd cases, for example, when $g_k(1)=s_k>0$ and $g_k(n)=0$ for $n\ge 2$ (e.g. $c^{(k)}_{i,j}=0$ for every $i,j$). There are some less trivial examples, but one may observe that it holds in general. A particular case is shown in Observation \ref{obs:bounded-distance}.

\begin{definition}\label{def:C-subsystem}
	Given a component $C$, the $C$-subsystem is the system with $(*',s')$ deduced from $(*,s)$ by restricting $(*,s)$ to only the dimensions in the components at most $C$. In particular, $s'_i=s_i$ for $i\in C'$ and $C'\le C$ (other dimensions $i$ are removed). Likewise, the coefficients $c'^{(k)}_{i,j}=c^{(k)}_{i,j}$ for $k\in C'$ and $C'\le C$.
\end{definition}

\begin{remark}\label{rem:same-component}
	With the viewpoint of the $C$-subsystem and Proposition \ref{prop:in-greatest-component}, we can conclude that the limit 
    \[
        \lambda_k=\lim_{n\to\infty} \sqrt[n]{g_k(n)}
    \]
    exists for every $k$ due to the existence of $\lambda=\lim_{n\to\infty} \sqrt[n]{g(n)}$. For two vertices $i,j$ in the same component $C$, the two values $g_i(n)$ and $g_j(n)$ are in a constant factor of each other, by applying Proposition \ref{prop:in-greatest-component} to the $C$-subsystem. It follows that $\lambda_i=\lambda_j$.
\end{remark}

We introduce some notations for convenience later.
\begin{definition}
	Given two patterns $P_1=(T_1,\ell_1)$ and $P_2=(T_2,\ell_2)$. The composition of the two patterns, denoted by $P_1\oplus P_2$, is the pattern $P=(T,\ell)$ where $T$ is obtained from $T_1$ by replacing $\ell_1$ by $T_2$, and $\ell$ is the leaf $\ell_2$ in this instance of $T_2$. A quick observation is $M(P_1\oplus P_2)=M(P_1) M(P_2)$.

	For a pattern $P=(T,\ell)$, we denote by $|P|$ the number of leaves excluding $\ell$ in $T$. We have $|P_1\oplus P_2|=|P_1|+|P_2|$ for any two patterns $P_1,P_2$.

	When we regard ``the number of leaves of pattern $P$'', we mean $|P|$. In most of the cases in this paper, the distinction between $|P|$ and the number of leaves in $T$ does not matter very much. (However, one must be careful when taking the root, it must be the $|P|$-th root.)

	We also denote by $P\oplus T'$ the tree obtained from the tree of the pattern $P$ by replacing the marked leaf by the tree $T'$.
\end{definition}
Since the composition $\oplus$ is defined, we would make a remark on the following decomposition, which is implicitly used throughout the article.
\begin{remark}
    Every pattern $P=(T,\ell)$ can be decomposed into $P=P_1\oplus\dots\oplus P_t$ so that the marked leaf $\ell_k$ of each pattern $P_k=(T_k,\ell_k)$ is a branch of the root of $T_k$. Indeed, let the path from the root of $T$ to $\ell$ be $v_0,\dots,v_t$. Each tree $T_k$ has the root $v_{k-1}$ with the marked leaf $\ell_k=v_k$ being one branch, and the other branch is what originally in $T$.
\end{remark}

The following observation is useful in various proofs.
\begin{observation} \label{obs:pattern-for-distance}
	If there is a path from $i$ to $j$ of length $d$, then there exists a pattern of $d$ leaves with the associated matrix $M$ satisfying $M_{i,j}>0$. It follows that $g_i(n+d)\ge\const g_j(n)$ for every $n$.
    On the other hand, if $M$ is the matrix associated with some pattern and $M_{i,j}>0$, then there is a path from $i$ to $j$.
\end{observation}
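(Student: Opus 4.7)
The plan is to prove the two directions separately, each by a simple induction: the forward direction by induction on the path length $d$, and the reverse direction by induction on $|P|$.

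For the forward direction, in the base case $d=0$ we take $P$ to be the trivial pattern whose tree consists solely of the marked leaf $\ell$, so $|P|=0$ and the associated matrix is the identity, giving $M_{i,i}=1>0$. For the inductive step, suppose the path $i=i_0\to i_1\to\cdots\to i_d=j$ has length $d\ge 1$. By the definition of the dependency graph, the first edge $i\to i_1$ comes from some coefficient $c^{(i)}_{i_1,a}\ne 0$ or $c^{(i)}_{a,i_1}\ne 0$. In the first case, take $P_1=(T_1,\ell_1)$ where $T_1$ has root $*$, left child the marked leaf $\ell_1$, and right child a leaf holding the vector $s$; then the associated matrix $M_1$ satisfies $M_1[i,i_1]\ge c^{(i)}_{i_1,a} s_a>0$ since $s$ is positive. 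In the second case use the symmetric tree $s*u$. By the induction hypothesis, there is a pattern $P_2$ with $|P_2|=d-1$ leaves and associated matrix $M_2$ satisfying $M_2[i_1,j]>0$. Then the concatenation $P=P_1\oplus P_2$ has $|P|=d$ leaves, and since $M(P)=M_1M_2$, we get $M(P)[i,j]\ge M_1[i,i_1]M_2[i_1,j]>0$.

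For the reverse direction, do induction on $|P|$. If $|P|=0$ the tree is the bare marked leaf and $M=I$, so $M_{i,j}>0$ forces $i=j$, which is a path of length $0$. If $|P|\ge 1$, the tree $T$ has root $*$ with left and right subtrees, and the marked leaf lies in one of them; assume without loss of generality the left. Writing $L(u)$ for the linear expression from the left subpattern and $R$ for the vector value of the right subtree, the associated matrix decomposes as
\[
M_{k,m}=\sum_{p,q} c^{(k)}_{p,q}\, M'_{p,m}\, R_q,
\]
where $M'$ is the associated matrix of the left subpattern, which has strictly fewer leaves. If $M_{k,m}>0$, at least one summand is positive, giving indices $p,q$ with $c^{(k)}_{p,q}>0$ and $M'_{p,m}>0$. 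The first inequality supplies an edge $k\to p$ in the dependency graph, and the second, by the induction hypothesis, supplies a path $p\to\cdots\to m$. Concatenating yields a path from $k$ to $m$.

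There is essentially no obstacle here: both directions follow the same inductive template, with concatenation of patterns implementing composition of paths in one direction and the bilinear decomposition of a tree at its root implementing the reverse. The only mildly delicate point is the base case $|P|=0$ in the second induction, which requires accepting the degenerate pattern consisting of just the marked leaf; this is consistent with the definition since $|P|$ counts leaves \emph{excluding} $\ell$ and with the observation that the associated matrix of this pattern is the identity.
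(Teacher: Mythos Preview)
Your proof is correct and follows essentially the same approach as the paper: for the forward direction both build a one-leaf pattern for each edge and concatenate along the path, and for the reverse direction the paper's full decomposition $P=P_1\oplus\cdots\oplus P_t$ (with each $P_k$ having its marked leaf as a branch of the root) is exactly what your induction on $|P|$ at the root carries out one step at a time. The only cosmetic difference is your explicit treatment of the degenerate base case $|P|=0$, which the paper leaves implicit.
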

\begin{proof}
	Suppose there is an edge from $k$ to $i$ with $c^{(k)}_{i,j}>0$. Consider the pattern $(T,\ell)$ for a tree $T$ of two leaves with $\ell$ on the left. The associated matrix $M$ has $M_{k,i}>0$. A similar construction is for $c^{(k)}_{j,i}>0$ with the marked leaf $\ell$ on the right. Let the path from $i$ to $j$ be $k_0,k_1,\dots,k_d$. The desired pattern is $P_1\oplus\dots\oplus P_d$ where $P_t$ is the pattern constructed from the edge $k_{t-1}k_t$.

	Given such a pattern $P=P_1\oplus\dots\oplus P_d$, we can see that $g_i(n+d)\ge\const g_j(n)$ by considering the tree $P\oplus T^*$ where the tree $T^*$ is associated with $g_j(n)$.

	In the other direction, let $P$ be the pattern that $M$ with $M_{i,j}>0$ is associated with. Consider the decomposition $P=P_1\oplus\dots\oplus P_t$ so that each $P_k$ has the marked leaf being one branch of the root. Let $M_k$ be the associated matrix with $P_k$, we have $M=M_1\dots M_t$, that is
    \[
        M_{i,j}=\sum_{k_1,\dots,k_{t-1}} (M_1)_{i,k_1} (M_2)_{k_1,k_2} \dots (M_{t-1})_{k_{t-2},k_{t-1}} (M_t)_{k_{t-1},j}.
    \]    
    As $M_{i,j}>0$, there exist $k_1,\dots,k_{t-1}$ so that all 
    \[
        (M_1)_{i,k_1},(M_2)_{k_1,k_2},\dots,(M_{t-1})_{k_{t-2},k_{t-1}},(M_t)_{k_{t-1},j}
    \]
    are positive. It follows that there are edges $ik_1$, $k_1k_2$, $\dots$, $k_{t-2}k_{t-1}$, $k_{t-1}j$, which form the path $i,k_1,k_2,\dots,k_{t-1},j$.
\end{proof}

\section{A polynomial upper bound of $g(n)/\lambda^n$ and another proof of the limit $\lambda$}
\label{sec:upper-bound}
In order to prove the upper bound in Theorem \ref{thm:main} and the limit $\lambda$ at the same time, we use the following quantity
\[
	\theta=\sup_P \max_i \sqrt[|P|]{M(P)_{i,i}}.
\]

At first, we express $\theta$ as a lower bound for the growth of $g(n)$.
\begin{proposition}\label{prop:lower-bound-of-liminf}
	The following inequality holds
	\[
		\liminf_{n\to\infty} \sqrt[n]{g(n)}\ge\theta.
	\]
\end{proposition}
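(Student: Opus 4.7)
The plan is to prove the inequality one pattern at a time: for each pattern $P=(T,\ell)$ and each index $i$ with $M(P)_{i,i}>0$ I would show $\liminf_{n\to\infty}\sqrt[n]{g(n)}\ge \sqrt[|P|]{M(P)_{i,i}}$, and then pass to the supremum over $(P,i)$. So fix such $P,i$ and write $p=|P|$, $\alpha=M(P)_{i,i}$. First I would iterate $P$ on $s$: the concatenation $P^{\oplus k}$ has $kp$ free leaves and associated matrix $M(P)^k$, so plugging $s$ into its marked leaf gives $v^{(k)}:=M(P)^k s\in A_{kp+1}$ with $v^{(k)}_i\ge (M(P)^k)_{i,i}s_i\ge \alpha^k s_i$, because all entries are nonnegative and the diagonal loop $i\to i\to\dots\to i$ alone contributes $\alpha^k$. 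This already covers every $n$ of the form $kp+1$.

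To cover the remaining residues of $n$ modulo $p$, I would prepend a short ``connecting'' pattern $Q_r$ of size $|Q_r|=r$ for each $r\in\{1,\dots,p-1\}$. Observation \ref{obs:pattern-for-distance}, applied to $M(P)_{i,i}>0$, shows that $i$ lies on a closed walk in the dependency graph, hence in a strongly connected component $C$ in which every vertex has an in-neighbour inside $C$ (even when $C=\{i\}$, since there must then be a self-loop at $i$). Walking backward from $i$ inside $C$ for any number of steps produces a path of every length $r\ge 1$ ending at $i$, and Observation \ref{obs:pattern-for-distance} in turn supplies a pattern $Q_r$ with $|Q_r|=r$ and $M(Q_r)_{j_r,i}>0$ for some vertex $j_r$. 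Let $Q_0$ be the trivial single-leaf pattern, for which $M(Q_0)=I$.

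With these pieces in hand, for any $n\ge 1$ I would write $n=kp+1+r$ with $k\ge 0$ and $0\le r<p$ and evaluate $Q_r\oplus P^{\oplus k}$ at $s$: the result lies in $A_n$ and its $j_r$-th entry is at least
\[
M(Q_r)_{j_r,i}\,(M(P)^k)_{i,i}\,s_i\ \ge\ c_r\alpha^k,
\qquad c_r:=M(Q_r)_{j_r,i}s_i>0
\]
(with $c_0:=s_i$ and $j_0:=i$). Setting $c:=\min_{0\le r<p}c_r>0$ yields $g(n)\ge c\,\alpha^{\lfloor(n-1)/p\rfloor}$ for every $n$, whence $\liminf_n\sqrt[n]{g(n)}\ge \alpha^{1/p}$, and taking the supremum over $(P,i)$ finishes the argument. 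The one nontrivial ingredient, as opposed to concatenation bookkeeping, is the existence of the auxiliary patterns $Q_r$ for every $r\in\{1,\dots,p-1\}$; this is the only step where we genuinely use the structure of the dependency graph at $i$, and it is where I would expect any technical difficulty to lie.
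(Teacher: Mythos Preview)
Your argument is correct and essentially the same as the paper's: iterate $P$ to get $g(kp+1)\ge\const\alpha^k$, then use a short auxiliary pattern of each length $r<p$, whose existence follows from Observation~\ref{obs:pattern-for-distance} and the fact that $i$ lies on a cycle, to cover the remaining residues modulo $p$. The only cosmetic difference is that the paper \emph{appends} a pattern $P'$ (using a walk of length $r$ \emph{from} $i$) whereas you \emph{prepend} $Q_r$ (using a walk of length $r$ \emph{into} $i$); both directions work for the same reason, namely that $i$ lies in a component with at least one internal edge.
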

\begin{proof}
    It suffices to prove that for any $P$ and any $i$, we have $\liminf_{n\to\infty} \sqrt[n]{g(n)}\ge\sqrt[|P|]{M(P)_{i,i}}$. We assume $M(P)_{i,i}>0$, otherwise it is trivial. Considering the sequence $P_1,P_2,\dots$ where $P_1=P$ and $P_k=P_{k-1}\oplus P$ for $k\ge 2$, we have $M(P_t)=M(P)^t$. 
    For each $n=t|P|+r$ ($1\le r\le |P|$), let $v$ be the vector associated with the tree $P_t\oplus T_0$ where $T_0$ is the tree associated with $g_i(r)$. The conclusion follows from
    \begin{align*}
            g(n)&\ge v_i\ge M(P_t)_{i,i} g_i(r) \ge\const M(P_t)_{i,i} = \const (M(P)^t)_{i,i}\\
            &\ge \const (M(P)_{i,i})^t = \const \left(\sqrt[|P|]{M(P)_{i,i}}\right)^{t|P|}\ge \const \left(\sqrt[|P|]{M(P)_{i,i}}\right)^n. \qedhere
    \end{align*} 
\end{proof}

The following observation relates $\theta$ to the entries of the associated matrices.
\begin{observation}\label{obs:upper-bound}
	For any linear pattern $P=(T,\ell)$ with the associated matrix $M$ and $|P|=m$. If $i,j$ are in the same component, then
	\[
		M_{i,j}\le\const\theta^m.
	\]
\end{observation}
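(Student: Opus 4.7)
The plan is to reduce the off-diagonal bound to the diagonal bound defining $\theta$ by completing a path from $j$ back to $i$. Because $i$ and $j$ lie in the same strongly connected component of the dependency graph, there is a directed path from $j$ to $i$. By Observation~\ref{obs:pattern-for-distance}, for each ordered pair $(i,j)$ in a common component with $i\ne j$, we may fix once and for all a pattern $Q_{ij}$ with associated matrix $N_{ij}$ such that $(N_{ij})_{j,i} > 0$; write $d_{ij}=|Q_{ij}|$ and $\alpha_{ij}=(N_{ij})_{j,i}$.

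Next I would exploit concatenation. For the given pattern $P=(T,\ell)$ with $|T|=m$ and associated matrix $M$, form $P\oplus Q_{ij}$. Its associated matrix is $MN_{ij}$, so
\[
    (MN_{ij})_{i,i} \;=\; \sum_k M_{i,k}(N_{ij})_{k,i} \;\ge\; M_{i,j}\,\alpha_{ij}.
\]
On the other hand, since $|P\oplus Q_{ij}|=|P|+d_{ij}$, the very definition of $\theta$ yields
\[
    (MN_{ij})_{i,i} \;=\; M(P\oplus Q_{ij})_{i,i} \;\le\; \theta^{\,|P|+d_{ij}}.
\]
Combining the two inequalities gives $M_{i,j}\le \theta^{d_{ij}}\alpha_{ij}^{-1}\cdot\theta^{|P|}$, and since $|P|=m-1$ this is a constant multiple of $\theta^m$. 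The case $i=j$ is immediate from the definition of $\theta$, and the degenerate case $\theta=0$ forces $M_{i,j}=0$ through the same inequality. Absorbing the finitely many constants $\theta^{d_{ij}}\alpha_{ij}^{-1}$ over all pairs $(i,j)$ in a common component into the implicit $\const$ yields the claim.

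I do not foresee a genuine obstacle here: the only delicate points are bookkeeping (distinguishing $|T|$ from $|P|$ loses just a factor of $\theta^{\pm 1}$, harmless when absorbed), and verifying that the fixed patterns $Q_{ij}$ can be chosen independently of $P$ — which is precisely what Observation~\ref{obs:pattern-for-distance} guarantees, since the existence of the patterns depends only on the dependency graph. The argument crucially uses that $\theta$ is defined as a supremum and therefore controls $M(P\oplus Q_{ij})_{i,i}$ directly, so no iteration or limiting procedure is needed.
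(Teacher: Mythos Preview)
Your argument is correct and is essentially the same as the paper's: both fix a bounded-size pattern $P_0$ (your $Q_{ij}$) with $M(P_0)_{j,i}>0$ via Observation~\ref{obs:pattern-for-distance}, concatenate to obtain $M(P\oplus P_0)_{i,i}\ge M_{i,j}\,M(P_0)_{j,i}$, and then invoke the defining inequality $M(P\oplus P_0)_{i,i}\le\theta^{|P|+|P_0|}$. Your write-up is slightly more explicit about the constants and edge cases (the $i=j$ and $\theta=0$ situations, and the $|T|$ versus $|P|$ discrepancy), but the method is identical.
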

\begin{proof}
	We assume the component contains at least one edge, as it is trivial otherwise.
    By Observation \ref{obs:pattern-for-distance}, we have a linear pattern $P_0=(T_0,\ell_0)$ of a bounded number of leaves so that the associated matrix $M_0$ has $(M_0)_{j,i} > 0$. Let $P'=P\oplus P_0$ and $M'$ be the matrix associated with $P'$. We have ${M'}_{i,i}\ge M_{i,j} (M_0)_{j,i}\ge\const M_{i,j}$. On the other hand, ${M'}_{i,i}\le\theta^{m+O(1)}$, which is followed by $M_{i,j}\le\const\theta^m$.
\end{proof}

Now, we will investigate a more complicated case.
\begin{proposition}\label{prop:upper-bound-matrix-entry}
	Given a vertex $i$ with the condition that for every $j$ in a component lower than the component of $i$, there exists some $\alpha$ so that $g_j(m)\le\const m^\alpha \theta^m$ for every $m$. Suppose $k$ is a vertex in a component lower than the component of $i$ and $g_k(m)$ is not bounded by any fixed constant times $\theta^m$ for every $m$, then there exists some $r$ so that for every linear pattern $P$ with $|P|=n$ and the associated matrix $M$, we have
	\[
		M_{i,k}\le\const n^r\theta^n.
	\]
\end{proposition}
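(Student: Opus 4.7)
The plan is to induct on $n=|P|$ and use the decomposition of $T$ along the path from its root to the marked leaf. Writing $M(P)=D_1 D_2\cdots D_t$, each $D_a$ is a $d\times d$ matrix whose entries are bilinear in the coefficients $c^{(\cdot)}_{\cdot,\cdot}$ of $*$ and in the value $v_a\in A_{\sigma_a}$ of the sibling subtree $T_a$ attached at the $a$-th step of that path (with $\sum_a\sigma_a=n$). Expanding the product gives
\[
	M(P)_{i,k}=\sum\prod_{a=1}^t (D_a)_{p_{a-1},p_a},
\]
where the sum ranges over sequences $(i=p_0,p_1,\ldots,p_{t-1},p_t=k)$. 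Since every nonzero entry of $D_a$ corresponds to an edge of the dependency graph, any contributing sequence is non-increasing with respect to the component partial order. Writing $C$ for the component of $i$, the hypothesis that $k$ lies strictly below $C$ forces every contributing sequence to have a first index $a^*$ at which it leaves $C$.

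I would bound the contribution for each fixed $a^*$, $p_{a^*-1}\in C$ and $p_{a^*}\notin C$ as a product of three factors. The prefix $\left(\prod_{a<a^*}D_a\right)_{i,p_{a^*-1}}$ is a matrix entry of a linear pattern staying entirely within $C$ and so is at most $\const\,\theta^{\sigma_1+\cdots+\sigma_{a^*-1}}$ by Observation \ref{obs:upper-bound}. The suffix $\left(\prod_{a>a^*}D_a\right)_{p_{a^*},k}$ has both endpoints strictly below $C$; writing it as $M(P_2)_{p_{a^*},k}$ for a subpattern $P_2$ and using $M(P_2)_{p_{a^*},k}s_k\le(M(P_2)s)_{p_{a^*}}\le g_{p_{a^*}}(|P_2|+1)$ together with the hypothesis on lower components yields a bound of order $n^{r_1}\theta^{\sigma_{a^*+1}+\cdots+\sigma_t}$. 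For the transition $(D_{a^*})_{p_{a^*-1},p_{a^*}}$, which is a linear combination of entries $(v_{a^*})_{j'}$ with $j'$ in a component at most $C$, the hypothesis directly controls the terms with $j'$ strictly below $C$; for $j'\in C$ I would realize $v_{a^*}=M(P_0)s$ by marking any leaf of $T_{a^*}$ (so $|P_0|=\sigma_{a^*}-1<n$) and then reduce the bound on $M(P_0)_{j',q}$ either to Observation \ref{obs:upper-bound} (when $q\in C$) or to the present proposition applied inductively on $n$ (when $q$ is strictly below $C$). This gives a transition bound of order $\sigma_{a^*}^{r_2}\theta^{\sigma_{a^*}}$.

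The main obstacle, in my view, is keeping the polynomial exponent uniform as the induction progresses. Multiplying the three estimates and summing over the bounded number of choices of $a^*$, $p_{a^*-1}$ and $p_{a^*}$ naively produces something like $\const\, n^{r_1+r_2}\theta^n$, so an iterated application of the induction would risk inflating the exponent. This is where I expect the assumption that $g_k(m)$ is not bounded by $\const\theta^m$ to be used essentially: attaching to the marked leaf of $P$ any tree realizing $g_k(m)$ produces a vector in $A_{n+m}$, yielding the elementary inequality $M(P)_{i,k}\cdot g_k(m)\le g_i(n+m)$. Since $g_k(m)/\theta^m$ can be made arbitrarily large by a suitable choice of $m$, the accumulated polynomial factor can be absorbed into this ratio, collapsing the final estimate to a bound $M(P)_{i,k}\le\const\, n^r\theta^n$ with an exponent $r$ independent of $n$; balancing $m$ against $n$ is the delicate step.
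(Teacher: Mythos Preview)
Your decomposition $M(P)=D_1\cdots D_t$ along the root-to-leaf path and the identification of the first exit index $a^*$ from $C$ match the paper exactly, as do your prefix and suffix bounds. The divergence is entirely in the transition factor $(D_{a^*})_{p_{a^*-1},p_{a^*}}=\sum_{j'} c^{(p_{a^*-1})}_{p_{a^*},j'}(v_{a^*})_{j'}$ (or its mirror), and this is precisely where the unboundedness hypothesis on $g_k$ does its real work---not at the end, as you propose.

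The paper's argument is that any $j'$ with $c^{(p_{a^*-1})}_{p_{a^*},j'}>0$, $p_{a^*-1}\in C$, $p_{a^*}\notin C$ \emph{cannot} lie in $C$. Indeed, place a tree realizing $g_{p_{a^*}}(m)$ as one branch and take the marked leaf as the other; the resulting one-step pattern has its $(p_{a^*-1},j')$-entry at least $\const\, g_{p_{a^*}}(m)$, while Observation~\ref{obs:upper-bound} forces it to be at most $\const\,\theta^m$ since both endpoints would be in $C$. But the suffix being nonzero gives a path from $p_{a^*}$ to $k$, so $g_{p_{a^*}}(m)/\theta^m$ inherits unboundedness from $g_k$ via Observation~\ref{obs:pattern-for-distance}---contradiction. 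Once $j'\notin C$ is forced, the hypothesis on lower components gives $(v_{a^*})_{j'}\le g_{j'}(\sigma_{a^*})\le\const\,\sigma_{a^*}^\beta\theta^{\sigma_{a^*}}$ directly; summing over the linearly many choices of $a^*$ yields $r=\alpha+\beta+1$ with no induction on $n$.

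Your inductive route for the case $j'\in C$ is not just delicate but has two concrete gaps. First, invoking the proposition on $M(P_0)_{j',q}$ with $q$ strictly below $C$ requires $g_q(m)/\theta^m$ to be unbounded, which need not hold for arbitrary $q$; when $g_q(m)\le\const\,\theta^m$ you have no alternative bound on $M(P_0)_{j',q}$, since $j'\in C$ lies outside the hypothesis and there is no path back from $q$ to $j'$ to close a cycle. Second, your absorption trick $M(P)_{i,k}\cdot g_k(m)\le g_i(n+m)$ is circular: at this point in the paper there is no polynomial-times-$\theta^n$ bound on $g_i$, because Proposition~\ref{prop:upper-bound-matrix-entry} is an \emph{ingredient} of that bound (Theorem~\ref{thm:upper-bound-by-theta}), not a consequence of it.
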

\begin{proof}
	Decompose $P$ into $P=P_1\oplus\dots\oplus P_t$ so that each pattern has the marked leaf being a branch of the root. Let $M_1,\dots,M_t$ be the matrices associated with $P_1,\dots,P_t$ respectively, we have $M=M_1\dots M_t$. Let $C$ be the component of $i$, the entry $M_{i,k}$ can be written as
	\begin{equation}\label{eq:decomposition}
		M_{i,k}=\sum_{\substack{1\le s\le t \\ j_1\in C,\,j_2\notin C}} (M_1\dots M_{s-1})_{i,j_1} (M_s)_{j_1,j_2} (M_{s+1}\dots M_t)_{j_2,k},
    \end{equation}
    where $j_1=i$ if $s=1$ and $j_2=k$ if $s=t$.

	We proceed by considering all the nonzero summands. This means $j_2$ is in a component directly lower than $C$, in order for $(M_s)_{j_1,j_2}$ to be nonzero. We can conclude right away that $(M_{s+1}\dots M_t)_{j_2,k}\le\const g_{j_2}(m)\le \const m^\alpha \theta^m$ where $m=|P_{s+1}|+\dots+|P_t|+1$ by the condition of the proposition. Also, by Observation \ref{obs:upper-bound}, we have $(M_1\dots M_{s-1})_{i,j_1}\le\const\theta^{|P_1|+\dots |P_{s-1}|}$ as $i,j_1$ are both in $C$.

	It remains to consider $(M_s)_{j_1,j_2}$. Suppose the marked leaf of $P_s$ is the left branch, without loss of generality. Let $v$ be the vector associated with the right branch of the tree of $P_s$. We have
    \[
        (M_s)_{j_1,j_2}=\sum_j c^{(j_1)}_{j_2, j} v_j.
    \]
    Note that in order for the summand in \eqref{eq:decomposition} to be nonzero, there is a path from $j_2$ to $k$, hence $g_{j_2}(m)$ is also not bounded by any fixed constant times $\theta^m$ for every $m$ by Observation \ref{obs:pattern-for-distance}. We can see that for any $j$ so that $c^{(j_1)}_{j_2, j} > 0$, the vertex $j$ is not in $C$. Indeed, assume otherwise, consider a linear pattern $\hat P$ where the left branch of $m$ leaves has the associated vector $u$ with $u_{j_2}=g_{j_2}(m) > K \theta^m$ for $K$ large enough and the right branch is the marked leaf. The associated matrix $\hat M$ has 
    \[
        \hat M_{j_1,j}=\sum_{j'} c^{(j_1)}_{j',j} u_{j'}\ge c^{(j_1)}_{j_2,j} u_{j_2}>\const K\theta^m
    \]
    contradicting Observation \ref{obs:upper-bound}. Therefore, for such $j$ we have $v_j\le g_j(|P_s|)\le \const |P_s|^{\alpha} \theta^{|P_s|}$ by the condition of the proposition. In other words,
    \[
        (M_s)_{j_1,j_2} = \sum_j c^{(j_1)}_{j_2, j} v_j \le \const |P_s|^{\alpha} \theta^{|P_s|}.
    \]

	In total, each summand in \eqref{eq:decomposition} is at most $\const n^{2\alpha} \theta^n$. Meanwhile, there are only at most linearly many options for $s$ and constantly many options for $j_1,j_2$. Therefore, for $r=2\alpha+1$, we have
	\[
		M_{i,k}\le\const n^r\theta^n.\qedhere
	\]
\end{proof}

The condition in Proposition \ref{prop:upper-bound-matrix-entry} is in fact not necessary by Proposition \ref{prop:upper-bound-by-theta} below.
The former proposition itself is actually a fact used in the induction step of the proof of the latter proposition.
\begin{proposition}\label{prop:upper-bound-by-theta}
	There exists $r$ so that
	\[
		g(n)\le\const n^r\theta^n.
	\]
\end{proposition}
\begin{proof}
	We prove an equivalent conclusion that: For every $i$ there exists $r$ so that 
	\[
		g_i(n)\le\const n^r\theta^n.
	\]

	Observation \ref{obs:upper-bound} is equivalent to the conclusion for any $i$ a minimal component $C$. Indeed, let $P=(T,\ell)$ where $T$ is the tree associated with $g_i(n)$ and $\ell$ is any leaf. Let $M$ be the associated matrix, we have $g_i(n)\le\const M_{i,j}$ for some $j$. Since $j$ is also in $C$, we have $g_i(n)\le\const M_{i,j}\le \const \theta^n$.

	Consider a component $C$, suppose the conclusion holds for any index $j$ in a component lower than $C$, we prove that it holds for any $i$ in $C$. 

	Consider a tree $T$ of $n$ leaves whose associated vector $v$ has $v_i=g_i(n)$. Pick any subtree $T_0$ of $m$ leaves so that $n/3\le m\le 2n/3$. Consider the decomposition $T=P'\oplus T_0$.
	Let $M'$ be the associated matrix of $P'$. 

	Let $u$ be the vector associated with $T_0$, we have $v=M'u$, that is $v_i=\sum_j M'_{i,j} u_j$. It follows that for some $k$, we have 
	\[
		g_i(n)\le\const M'_{i,k} u_k.
	\]

	We consider the following cases:
	\begin{itemize}
		\item
			If $k$ is in a lower component than $C$ but $g_k(t)$ is not bounded by any fixed constant times $\theta^t$ for every $t$, then by Proposition \ref{prop:upper-bound-matrix-entry}, we have ${M'}_{i,k}\le\const (n-m)^\alpha \theta^{n-m}$ for some $\alpha$.
			Suppose $g_k(t)\le\const t^\beta \theta^t$ for some $\beta$, we have $u_k\le g_k(m)\le \const m^\beta \theta^m$. In total,
			\[
				g_i(n)\le \const (n-m)^\alpha \theta^{n-m} m^\beta \theta^m\le \const n^{\alpha+\beta} \theta^n = \const n^\gamma\theta^n,
			\]
			where $\gamma=\alpha+\beta$.

		\item
			If $k$ is in a lower component than $C$ and $g_k(t)\le\const\theta^t$ for every $t$, then by $u_k\le g_k(m)\le \const \theta^m$ and ${M'}_{i,k}\le\const g_i(n-m+1)$, we have 
			\[
				g_i(n)\le \const g_i(n-m+1) \theta^m.
			\]

		\item
			If $k$ is in the component $C$, we have ${M'}_{i,k}\le\const\theta^{n-m}$ by Observation \ref{obs:upper-bound}. As $u_k\le g_k(m)$, we have 
			\[
				g_i(n)\le\const\theta^{n-m} g_k(m).
			\]
	\end{itemize}

	In any case of the two latter cases, we have reduced the size $n$ considerably by at least a fraction of $n$ to $n-m+1$ as in $g_i(n-m+1)$, or to $m$ as in $g_k(m)$. Note that $g_i(n-m+1)$ and $g_k(m)$ have $i,k$ both still in the component $C$. After repeating the process $O(\log n)$ times, and stopping only when the current $n$ is small enough or we fall into the first case, we obtain
	\[
		g_i(n)\le \const K^{O(\log n)} n^\gamma \theta^{n+O(\log n)},
	\]
	for some constant $K$.

	As $x^{\log n}=n^{\log x}$, the induction step finishes since for some $r$,
	\[
		g_i(n)\le\const n^r\theta^n
	\]

	The conclusion follows by induction.
\end{proof}

Now the existence of $\lambda$ is clear due to Proposition \ref{prop:lower-bound-of-liminf} and
\[
	\limsup_{n\to\infty} \sqrt[n]{g(n)}\le\limsup_{n\to\infty} \sqrt[n]{\const n^r\theta^n}=\theta.
\]

A corollary is $\lambda=\theta$.
\begin{corollary}\label{cor:new-formula}
	The limit $\lambda$ exists and can be expressed as
	\[
		\lambda=\sup_P \max_i \sqrt[|P|]{M(P)_{i,i}}.
	\]
\end{corollary}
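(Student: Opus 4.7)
The proof is essentially a two-line sandwich, since all the work has already been done in Proposition~\ref{prop:lower-bound-of-liminf} and Theorem~\ref{thm:upper-bound-by-theta}. The plan is to combine these two bounds to simultaneously establish the existence of $\lambda$ and its identification with $\theta$.

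First, I would invoke Theorem~\ref{thm:upper-bound-by-theta} to pick some $r$ with $g(n)\le\const n^r\theta^n$. Taking $n$-th roots and using $\sqrt[n]{n^r}\to 1$ and $\sqrt[n]{\const}\to 1$, this yields
\[
	\limsup_{n\to\infty}\sqrt[n]{g(n)}\le\theta.
\]
Next, Proposition~\ref{prop:lower-bound-of-liminf} gives the matching inequality
\[
	\liminf_{n\to\infty}\sqrt[n]{g(n)}\ge\theta.
\]

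Since $\liminf$ and $\limsup$ coincide, the limit $\lambda=\lim_{n\to\infty}\sqrt[n]{g(n)}$ exists, and its value is exactly $\theta=\sup_P \max_i \sqrt[|P|]{M(P)_{i,i}}$, which is the claimed formula. There is no real obstacle here beyond assembling the two preceding results; the genuine work was in Theorem~\ref{thm:upper-bound-by-theta}, whose induction over the partial order of components supplied the polynomial-in-$n$ bound that is robust enough to survive the $n$-th root.
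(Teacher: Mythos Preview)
Your proof is correct and matches the paper's argument essentially verbatim: the paper also just combines Proposition~\ref{prop:lower-bound-of-liminf} with the $\limsup$ bound obtained from Theorem~\ref{thm:upper-bound-by-theta} to sandwich $\lambda$ between $\theta$ and $\theta$.
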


Note that in \cite{bui2021growth}, it was shown that
\[
    \lambda=\sup_P \sqrt[|P|]{\rho(M(P))},
\]
where $\rho(M(P))$ is the spectral radius of $M(P)$. The readers can relate this to the fact: Given a nonnegative matrix $M$, we have
\[
    \rho(M) = \sup_n \max_i \sqrt[n]{(M^n)_{i,i}}
\]
(see \cite{bui2023bound} for the treatment of the more general notion of joint spectral radius). Actually, these two results can deduce the new formula in Corollary \ref{cor:new-formula}.

Another corollary is the upper bound for $g(n)$ in Theorem \ref{thm:main}.
\begin{theorem}\label{thm:upper-bound}
	There exists $r$ so that
	\[
		g(n)\le\const n^r\lambda^n.
	\]
\end{theorem}

\section{A polynomial upper bound for $\lambda^n/g(n)$}
\label{sec:lower-bound}
In this section, we prove the lower bound in Theorem \ref{thm:main}, as restated in the theorem below.

\begin{theorem}\label{thm:lower-bound}
	There exists $r$ so that
	\[
		g(n)\ge\const n^{-r}\lambda^n.
	\]
\end{theorem}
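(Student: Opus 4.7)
The strategy is to establish a super-multiplicativity-style inequality for $g(n)$ and iterate it, using near-optimal patterns supplied by Corollary~\ref{cor:new-formula} ($\lambda=\sup_P\max_i\sqrt[|P|]{M(P)_{i,i}}$) as ``building blocks''. I expect the proof to split into a structural reduction, a super-multiplicative inequality, and a careful iteration with quantitative bookkeeping of the polynomial loss.

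First I would reduce to the strongly connected setting. Using Proposition~\ref{prop:in-greatest-component} and Remark~\ref{rem:same-component}, I restrict attention to the $C$-subsystem where $C$ is the top component; inside $C$ the dependency graph is strongly connected and $g_k(n)\asymp g(n)$ for all $k\in C$. This gives the freedom to reroute the ``active coordinate'' of any near-optimal tree to any chosen index of $C$ at the cost of appending a bounded-size linear pattern (via Observation~\ref{obs:pattern-for-distance}).

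Next I would prove super-multiplicativity of the form
\[
g(n_1+n_2+d)\ \ge\ c\cdot g(n_1)\cdot g(n_2),
\]
for some fixed $c,d$ depending only on $(*,s)$. Given trees $T_1,T_2$ of sizes $n_1,n_2$ whose associated vectors attain $g(n_1),g(n_2)$ at indices $i_1,i_2\in C$, use the strong connectedness of $C$ together with Observation~\ref{obs:pattern-for-distance} to pick a triple $(i_1^*,i_2^*,k^*)$ with $c^{(k^*)}_{i_1^*,i_2^*}>0$ and bounded-size linear patterns routing $i_\ell\mapsto i_\ell^*$. Attaching these routing patterns to $T_1,T_2$ and then combining via $*$ produces a tree of $n_1+n_2+O(1)$ leaves whose $k^*$-th entry is at least $\const\cdot g(n_1)g(n_2)$; Proposition~\ref{prop:in-greatest-component} then converts this into the desired inequality on $g$.

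For the iteration, I would use Corollary~\ref{cor:new-formula} to produce, for each scale, a linear pattern $P$ with $M(P)_{i,i}$ within a polynomial factor of $\lambda^{|P|}$, so that iterating $P$ gives trees with values of size $\const\,\lambda^{N}/N^{r_0}$ on a suitable arithmetic progression of $N$'s. The super-multiplicative inequality then lets me glue such a ``spine'' onto any residue class, each gluing costing only a constant factor, yielding $g(n)\ge \const\,n^{-r}\lambda^n$ for all $n$. The main obstacle is quantitative: because the super-multiplicativity must be iterated multiple times and because $\lambda$ may not be attained by any single pattern, the polynomial exponent $r$ will depend both on the upper-bound exponent $r'$ of Corollary~\ref{cor:upper-bound} and on the combinatorial cost of rerouting at each gluing, and the sizes of the iteration chunks must be chosen carefully (typically at scale $\Theta(n)$) so that the accumulated loss does not exceed a polynomial in $n$.
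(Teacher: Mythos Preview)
Your plan contains a genuine gap: super-multiplicativity is the wrong direction for a lower bound. If you establish $g(n_1+n_2+d)\ge c\,g(n_1)g(n_2)$, then (after absorbing $d$) the sequence $c\,g(n)$ is supermultiplicative, and Fekete's lemma yields $\lambda=\sup_n\sqrt[n]{c\,g(n)}$, hence $g(n)\le c^{-1}\lambda^n$ --- an upper bound, not the desired lower one. You try to circumvent this by appealing to Corollary~\ref{cor:new-formula} for ``a linear pattern $P$ with $M(P)_{i,i}$ within a polynomial factor of $\lambda^{|P|}$'', but the corollary only gives $\lambda=\sup_P\max_i\sqrt[|P|]{M(P)_{i,i}}$: for each $\epsilon>0$ there is \emph{some} $P$ with $M(P)_{i,i}\ge(\lambda-\epsilon)^{|P|}$, with no control on the size of $|P|$ as $\epsilon\to0$. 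The statement that a pattern of size $\Theta(n)$ has $M(P)_{i,i}\ge\lambda^{|P|}/|P|^{r_0}$ is essentially the theorem you are trying to prove, so the argument is circular at this step. Iterating a fixed $(\lambda-\epsilon)$-good pattern only yields $g(n)\ge\const(\lambda-\epsilon)^n$, which is exponentially weaker than $n^{-r}\lambda^n$.

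There is a second structural gap. Your gluing step requires a triple $k^*,i_1^*,i_2^*$ in the top component $C$ with $c^{(k^*)}_{i_1^*,i_2^*}>0$; strong connectedness of $C$ does \emph{not} guarantee this, because an edge $k\to i$ in the dependency graph only needs $c^{(k)}_{i,j}>0$ for \emph{some} $j$, possibly outside $C$. The paper isolates exactly this situation (``half-self-dependent'' components) and handles it by a separate argument. The paper's route is in fact the opposite of yours: it passes to a \emph{minimal} component $C$ with $\lambda_k=\lambda$, and then proves a weak \emph{sub}-multiplicativity $g_k(m+n)\le K^{\log m}g_k(m)g_k(n)$ (Proposition~\ref{prop:weak-submulti}) or outright submultiplicativity (Proposition~\ref{prop:submulti-for-half-self-dependency}) using the subtree-extraction inequality of Lemma~\ref{lem:extension}; Fekete then gives $\lambda=\inf_n\sqrt[n]{\alpha n^r g_k(n)}$ and hence the lower bound. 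The essential missing idea in your proposal is this submultiplicative inequality.
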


We first give a classification of components so that proving the lower bound of $g_k(n)$ for each $k$ in the component could be more convenient.
\begin{definition}
    A component $C$ is said to be \emph{strongly self-dependent} if there are three indices $k,i,j$ in $C$ (not necessarily different) so that $c^{(k)}_{i,j}>0$.
\end{definition}
\begin{definition}
	A component $C$ is said to be \emph{weakly self-dependent} if for every $k\in C$ and for any $c^{(k)}_{i,j}>0$ at least one of $i,j$ is in a lower component than $C$, and for any $j$ in a component lower than the component of $k$ we have $\lambda_j<\lambda_k$.
\end{definition}

Before classifying the remaining components, we give the following observation.
\begin{observation} \label{obs:depends-on-lower-components}
	Let $k$ be so that for each $c^{(k)}_{i,j}>0$, both $i,j$ are in a lower component than the component of $k$, we have 
	\[
		\lambda_k=\max_{i,j:\ c^{(k)}_{i,j}>0} \max\{\lambda_i,\lambda_j\}.
	\]
\end{observation}

Note that when there is no edge from $k$ (that is $c^{(k)}_{i,j}=0$ for every $i,j$), we have $\lambda_k=0$ with the convention that the maximum of an empty list is zero.

\begin{corollary}\label{cor:one-same-one-lower}
    For any $k$ in a weakly self-dependent component $C$, there exist $i,j$ so that $c^{(k)}_{i,j}>0$ and one of $i,j$ is in $C$ while the other one is in a component lower than $C$.
\end{corollary}

The remaining components $C$ (other than strongly self-dependent and weakly self-dependent components) satisfy (i) for every $k\in C$ and for any $c^{(k)}_{i,j}>0$ at least one of $i,j$ is in a lower component than $C$ and (ii) there exists $k\in C$ and $j$ in a lower component than $C$ so that $\lambda_k=\lambda_j$. It means the remaining components are included in the following class.
\begin{definition}
    A component $C$ is said to be \emph{not self-dependent} if for each $k\in C$ we have $\lambda_k=\lambda_j$ for some $j$ in a lower component than $C$.
\end{definition}

In total, we have the following proposition.
\begin{proposition}\label{prop:classes-of-components}
   The three classes of components: strongly self-dependent, weakly self-dependent and non-self-dependent components cover all the components.
\end{proposition}

The classification allows a convenient study of $g_k(n)$ for each type of components. In particular, we prove that $g_k(n)\ge\const n^{-r}{\lambda_k}^n$ for some $r$ if $k$ is in a strongly self-dependent component (in Theorem \ref{thm:lower-bound-connected-dependency}), and $g_k(n)\ge\const {\lambda_k}^n$ if $k$ is in a weakly self-dependent component (in Theorem \ref{thm:submulti-for-half-self-dependency}).

\subsection{Strongly self-dependent components}
We show that $g_k(n)$ is weakly supermultiplicative for $k$ in a strongly self-dependent component.
\begin{theorem}\label{thm:upper-bound-almost-connected}
    Let $k$ be in a strongly self-dependent component with $c^{(k)}_{i,j}>0$. Then for any $m,n$ we have
	\[
		g_k(m+n)\ge\const g_k(m) g_k(n).
	\]
\end{theorem}
\begin{proof}
	Using the results in Section \ref{sec:preliminary}, we have
	\[
		g_k(m) g_k(n)\le\const g_i(m+d_1) g_j(n+d_2)\le \const g_k(m+n+d_1+d_2)\le\const g_k(m+n),
	\]
	where $d_1,d_2$ are the distances from $i$ to $k$ and from $j$ to $k$, respectively. The first inequality is due to Observation \ref{obs:pattern-for-distance}, while the last inequality is due to Proposition \ref{prop:g_k-not-very-different}. The middle inequality is obtained by considering a tree where the left branch is associated to $g_i(m+d_1)$ and the right branch is associated to $g_j(n+d_2)$.

	An alternate argument is
	\[
		g_k(m) g_k(n)\le\const g_i(m) g_j(n)\le \const g_k(m+n),
	\]
	where the first inequality is obtained by applying Proposition \ref{prop:in-greatest-component} as in Remark \ref{rem:same-component}.
\end{proof}

Corollaries of the result include the limit of $\sqrt[n]{g_k(n)}$ and $g_k(n)\le\const {\lambda_k}^n$ by applying Fekete's lemma \cite{fekete1923verteilung} to the supermultiplicative sequence $\{\const g_k(n)\}_n$. The upper bound is a case of Theorem \ref{thm:upper-bound} as we reduce the polynomial $n^r$ to $n^0=1$. Note that when the dependency graph is connected, the only component is strongly self-dependent, for which the above approach gives a simple proof of the limit. We give some further discussion about the proof of Theorem \ref{thm:upper-bound-almost-connected} as below, which can be safely skipped.

\begin{remark}
The two arguments in the proof both use not so trivial propositions. We can avoid using them and obtain a bit weaker result, which can however still show the bound of $g_k(n)$ and the limit of $\sqrt[n]{g_k(n)}$.
Indeed, after obtaining the inequality $g_k(m)g_k(n)\le K g_k(m+n+d_1+d_2)$ for some constant $K$ as in the first half of the first argument, we shift the sequence and multiply both sides by $K$ to get
\[
	K g_k(m-d_1-d_2) K g_k(n-d_1-d_2)\le K g_k(m+n-d_1-d_2).
\]
The sequence $s_n=K g_k(n-d_1-d_2)$ is supermultiplicative. Although the sequence $s_n$ is undefined for some beginning elements, (a variant of) Fekete's lemma still applies and we have $\sqrt[n]{s_n}$ converges to $\lambda_k=\sup_n \sqrt[n]{s_n}$. The original sequence $\sqrt[n]{g_k(n)}$ also converges to $\lambda_k$. The bound also follows.

In fact, we can deduce the upper bound $g_k(n)\le\const {\lambda_k}^n$ from Corollary \ref{cor:new-formula} as follows. Suppose otherwise, that is we have $g_k(n)=K{\lambda_k}^n$ for a large enough $K$. It follows that we have $g_j(n+d)\ge\const K{\lambda_k}^n$ where $d$ is the distance from $j$ to $k$ by Observation \ref{obs:pattern-for-distance}. Let the tree associated with $g_j(n+d)$ be $T_r$. We consider the pattern $P'=(T',\ell')$ where $T'$ is the tree taking $T_r$ as the right branch and the single $\ell'$ as the left branch. Let $M'$ be the matrix associated with $P'$, we have $M'_{k,i}\ge c^{(k)}_{i,j} g_j(n+d)\ge\const K{\lambda_k}^n$. Let $P_0=(T_0,\ell_0)$ be the pattern of a bounded number of leaves so that the associated matrix $M_0$ has $(M_0)_{i,k}>0$ (also by Observation \ref{obs:pattern-for-distance}). The matrix $M$ associated with $P=P'\oplus P_0$ has $M_{k,k}\ge M'_{k,i} (M_0)_{i,k}\ge\const K{\lambda_k}^n$. As $|P|-n$ is bounded, we have $\sqrt[|P|]{M_{k,k}} > \lambda_k$ when $K$ is large enough. This contradicts to Corollary \ref{cor:new-formula} when we consider the $C$-subsystem for the component $C$ of $k$, where $\lambda_k$ is the growth rate $\lambda$ for the new system. 
\end{remark}

The upper bound $\const{\lambda_k}^n$ is a nice corollary of Fekete's lemma. We naturally wonder what a lower bound would be, whether the leading constant should be replaced by something arbitrarily small. Actually, we would conjecture that $g_k(n)\ge\const{\lambda_k}^n$. However, what we could come up in the general case is just the following result. It is also a corollary of Fekete's lemma, but for a submultiplicative sequence. The interesting point is that the supermultiplicative form as in Theorem \ref{thm:upper-bound-almost-connected} is used in the proof.

\begin{theorem}\label{thm:lower-bound-connected-dependency}
	If $k$ is in a strongly self-dependent component, then there exists some $r$ so that
	\[
		g_k(n)\ge\const n^{-r} {\lambda_k}^n.
	\]
\end{theorem}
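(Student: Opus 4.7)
I plan to derive Theorem~\ref{thm:lower-bound-connected-dependency} by inverting the supermultiplicative inequality of Theorem~\ref{thm:upper-bound-almost-connected} into a submultiplicative companion sequence and applying Fekete's lemma, supplemented by a finer step-up argument using the structural properties of $g_k$.

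\textbf{Setup and Fekete step.} Let $K\ge 1$ be the constant in Theorem~\ref{thm:upper-bound-almost-connected}, so that $g_k(m+n)\ge K^{-1}g_k(m)g_k(n)$. Set
\[
	V(n)\;:=\;\frac{K\,\lambda_k^{n}}{g_k(n)}.
\]
Rewriting the inequality gives
\[
	V(m+n)\;\le\;V(m)\,V(n),
\]
so $V$ is submultiplicative. The matching upper bound $g_k(n)\le K\lambda_k^n$---the standard corollary of Fekete applied to the supermultiplicative sequence $\{K^{-1}g_k(n)\}$, as remarked right after Theorem~\ref{thm:upper-bound-almost-connected}---yields $V(n)\ge 1$. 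Fekete's lemma applied to the submultiplicative $V$ then produces $\sqrt[n]{V(n)}\to\inf_n\sqrt[n]{V(n)}=1$ (the last equality from $\sqrt[n]{g_k(n)}\to\lambda_k$), so $V$ grows subexponentially.

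\textbf{From subexponential to polynomial, and the main obstacle.} To promote the subexponential conclusion to $V(n)\le\textrm{const}\cdot n^{r}$, I would exploit the one-step specialization $g_k(n+1)\ge K^{-1}g_k(1)g_k(n)$ of Theorem~\ref{thm:upper-bound-almost-connected} (giving the uniform ratio bound $V(n+1)\le C\,V(n)$ with $C:=K\lambda_k/g_k(1)$) together with Proposition~\ref{prop:g_k-not-very-different} (giving reverse local comparison) to locate a geometrically-spaced scale $n_\ell\asymp 2^\ell$ at which $V(n_\ell)$ is uniformly bounded by a constant. Decomposing an arbitrary $n$ into at most $O(\log n)$ such $n_\ell$ in a binary-style expansion and iterating submultiplicativity then yields
\[
	V(n)\;\le\;\prod_{\ell:\,\epsilon_\ell=1}V(n_\ell)\;\le\;\textrm{const}^{\,O(\log n)}\;=\;\textrm{const}\cdot n^{r},
\]
which inverts to the desired $g_k(n)\ge\textrm{const}\cdot n^{-r}\lambda_k^n$. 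The crux of the argument is this final step: plain submultiplicativity together with Fekete limit~$1$ does not by itself imply a polynomial bound (e.g.\ $V(n)=e^{\sqrt n}$ satisfies all these constraints yet is super-polynomial), so the bilinear-map inputs---the one-step ratio inherited from Theorem~\ref{thm:upper-bound-almost-connected} and the local control of Proposition~\ref{prop:g_k-not-very-different}---must be exploited in an essential way to rule out such pathological behaviour and pin down the geometric scale.
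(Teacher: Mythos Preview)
Your Fekete inversion is set up correctly, and you are right that the whole difficulty sits in the last paragraph. But the inputs you list there do not close the gap. Everything you invoke---submultiplicativity of $V$, the bound $V(n)\ge 1$, the step ratio $V(n+1)\le C\,V(n)$ from Theorem~\ref{thm:upper-bound-almost-connected}, and the reverse step ratio $V(n+1)\ge c\,V(n)$ from Proposition~\ref{prop:g_k-not-very-different}---is satisfied by $V(n)=e^{\sqrt n}$, which is not polynomially bounded and for which \emph{no} geometric scale $n_\ell\asymp 2^\ell$ has $V(n_\ell)$ uniformly bounded. All of those inputs are essentially reformulations of the supermultiplicativity of $g_k$ (plus bounded local ratios), so they contain no information beyond what you already used to define $V$; in particular they cannot rule out $g_k(n)\sim \lambda_k^n e^{-\sqrt n}$, which is supermultiplicative up to a constant and has two-sided bounded step ratios.

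What is missing is an inequality in the \emph{opposite} direction for $g_k$, and that is where the bilinear/tree structure actually enters. The paper obtains it via Lemma~\ref{lem:extension} and Corollary~\ref{cor:extension-for-k}: if the optimal tree for $g_k(n)$ has a subtree of $m$ leaves, then $g_k(n)\le\const\,g_k(m)\,g_k(n-m)$. Iterating this over a binary subtree decomposition (Proposition~\ref{prop:weak-submulti}) yields
\[
g_k(m+n)\;\le\;K^{\log m}\,g_k(m)\,g_k(n)\qquad(m\le n),
\]
and it is precisely this $K^{\log m}$ overhead---coming from the tree structure, not from supermultiplicativity---that forces the polynomial: rewriting $K^{\log n}=n^{r}$ one sees that $\alpha n^{r}g_k(n)$ is genuinely submultiplicative, so Fekete gives $\lambda_k=\inf_n\sqrt[n]{\alpha n^{r}g_k(n)}$ and hence $g_k(n)\ge\const\,n^{-r}\lambda_k^n$. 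Your proposal should incorporate this subtree-based submultiplicative bound; the supermultiplicative side alone cannot produce the polynomial factor.
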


We first give a lemma, which is an extension of the technique used in the proof of \cite[Lemma 1]{bui2021growth}.
\begin{lemma} \label{lem:extension}
	Let $T$ be a tree of $n$ leaves associated to $g(n)$. If there is a subtree $T_0$ of $m$ leaves, then
	\[
		g(n)\le\const g(m) g(n-m+1).
	\]
\end{lemma}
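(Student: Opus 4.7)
My plan is to view the tree $T$ as the composition of a linear pattern with the subtree $T_0$. Concretely, let $T'$ denote the tree of $n - m + 1$ leaves obtained from $T$ by contracting $T_0$ down to a single leaf $\ell$. Setting $P = (T', \ell)$ and letting $u$ be the vector associated with $T_0$, the vector $v$ associated with $T$ is exactly $v = M u$ for the matrix $M = M(P)$, by the very definition of the matrix associated with a linear pattern.

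By hypothesis there is an index $i$ with $v_i = g(n)$. Expanding the matrix-vector product gives $v_i = \sum_j M_{i,j} u_j$, a sum of at most $d$ nonnegative terms. Picking a $j$ that maximizes $M_{i,j} u_j$, I obtain $g(n) = v_i \le d \cdot M_{i,j} u_j \le \const M_{i,j} u_j$.

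Next I would bound the two factors separately. For $u_j$, since $u$ is the vector associated with a tree of $m$ leaves, $u_j \le g_j(m) \le g(m)$ directly from the definitions. For $M_{i,j}$, I would invoke the quick observation recorded right after the definition of a linear pattern in Section \ref{sec:preliminary}, namely $M_{i,j} \le \const g_i(N)$ whenever $M$ is associated with a linear pattern on $N$ leaves; this is where positivity of $s$ enters, since plugging $s$ into the marked leaf yields $(M s)_i \ge M_{i,j} s_j$ with $s_j > 0$ an absolute constant. Applied here with $N = n - m + 1$, this gives $M_{i,j} \le \const g_i(n - m + 1) \le \const g(n - m + 1)$.

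Multiplying the two bounds delivers $g(n) \le \const g(m) g(n - m + 1)$, as desired. I do not foresee any genuine obstacle: the argument is a one-shot application of the bilinearity of $*$ combined with the standard observation on entries of pattern matrices. It is essentially the same bookkeeping used in \cite[Lemma 1]{bui2021growth}, except that the role of a branch at the root there is now played by an \emph{arbitrary} subtree $T_0$; this additional flexibility in the choice of $T_0$ is precisely what makes the lemma useful as an input to the Fekete-style argument leading to Theorem \ref{thm:lower-bound-connected-dependency}.
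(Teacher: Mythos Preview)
Your proof is correct and follows essentially the same approach as the paper: contract $T_0$ to a leaf to obtain the linear pattern $(T',\ell)$, write $v = Mu$, and bound the two factors by $g(m)$ and $g(n-m+1)$ respectively. The paper's write-up differs only cosmetically, replacing your term-by-term estimate by the single-line scaling inequality $(Mu)_i \le (\max_j u_j/s_j)(Ms)_i \le \const\, g(m)\, g(n-m+1)$.
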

\begin{proof}
	Let $T'$ be the tree obtained from $T$ after contracting the whole $T_0$ into a single leaf $\ell$. The tree $T'$ has $n-m+1$ leaves.
	Let $v,u,v'$ be the associated vectors with $T,T_0,T'$, respectively. Let $M$ be the matrix associated with the pattern $(T',\ell)$. Clearly, $v=Mu$ and $v'=Ms$. Suppose $g(n)=v_j$, we have
	\[
		g(n)=v_j=(Mu)_j \le \left(\max_i \frac{u_i}{s_i}\right) (Ms)_j \le \const g(m) v'_j \le \const g(m)g(n-m+1).\qedhere
	\]
\end{proof}

\begin{corollary} \label{cor:extension-for-k}
	Given some $k$, let $T$ be a tree of $n$ leaves associated to $g_k(n)$. If there is a subtree $T_0$ of $m$ leaves, then
	\[
		g_k(n)\le\const g_k(m) g_k(n-m).
	\]
\end{corollary}
\begin{proof}	
	At first, for every $n$, we have $g(n+1)\le\const g(n)$. This inequality can be deduced from \cite[Lemma 1]{bui2021growth}, which states that for every $k$, we have $g_k(n)\ge\const g_k(n+1)$. It is also a special case of Lemma \ref{lem:extension} itself when we take any subtree $T_0$ of $2$ leaves, which always exists. 

	It follows that with the same condition as in Lemma \ref{lem:extension}, we also get 
	\[
		g(n)\le\const g(m) g(n-m).
	\]
	The conclusion can now be obtained by applying Proposition \ref{prop:in-greatest-component} to the $C$-subsystem for the component $C$ of $k$.
\end{proof}
Note that this corollary actually deduces Proposition \ref{prop:g_k-not-very-different} by setting $m=1$ and applying it several times.

The following observation is useful later.
\begin{observation} \label{obs:bounded-distance}
	If $k$ is in a component that has at least one edge inside (loops are also counted), then for every $d$, we have
	\[
		g_k(n+d)\ge\const g_k(n).
	\]
	It follows that $g_k(n)$ and $g_k(n+d)$ are in a constant factor of each other (by Proposition \ref{prop:g_k-not-very-different}).
\end{observation}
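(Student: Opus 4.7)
The plan is to convert the hypothesis ``at least one edge inside the component'' into a linear pattern that preserves the $k$-th coordinate up to a constant factor, then use that pattern to enlarge any near-optimal tree.

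The key step is the following. Fix any edge $a\to b$ inside the component $C$ containing $k$. Since $C$ is strongly connected, there is a (possibly empty) directed path from $k$ to $a$ and a (possibly empty) directed path from $b$ to $k$; concatenating them with the edge $a\to b$ yields a closed walk through $k$ of some length $L\ge 1$. By Observation \ref{obs:pattern-for-distance}, there is then a linear pattern $P$ with $|P|=L$ whose associated matrix satisfies $M(P)_{k,k}>0$.

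Given any $n$, pick a tree $T_n$ of $n$ leaves whose associated vector $v$ realizes $v_k=g_k(n)$, and substitute $T_n$ for the marked leaf of $P$. The resulting tree has $n+L$ leaves, and its associated vector is $M(P)v$, whose $k$-th entry is at least $M(P)_{k,k}\cdot v_k = \const\cdot g_k(n)$. This gives the base inequality
\[
	g_k(n+L)\ge \const\cdot g_k(n).
\]
For an arbitrary fixed $d$, choose $q\ge 1$ with $qL\ge d$ and iterate the base inequality $q$ times to obtain $g_k(n+qL)\ge \const\cdot g_k(n)$ (with a constant depending on $q$, hence on $d$); then apply Proposition \ref{prop:g_k-not-very-different} with shift $qL-d$ to get $g_k(n+d)\ge \const\cdot g_k(n+qL)$, and combine the two to conclude.

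The only real content is in the first step: one must notice that the hypothesis on $C$, together with its strong connectivity, produces a closed walk \emph{through $k$ itself}, not merely somewhere in $C$. Everything after that is immediate from tools already developed in Section~\ref{sec:preliminary}, so I do not expect any serious obstacle.
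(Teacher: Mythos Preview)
Your argument is correct. Both your proof and the paper's proof build a linear pattern via Observation~\ref{obs:pattern-for-distance}, substitute the tree realizing $g_k(n)$ into its marked leaf, and then invoke an earlier result from Section~\ref{sec:preliminary} to clean up. The difference is only in which parameter is matched exactly and which is adjusted afterward. The paper notes that, since $C$ has an internal edge and is strongly connected, every vertex of $C$ has a predecessor in $C$; hence for \emph{every} $d\ge 1$ one can walk backward $d$ steps from $k$ inside $C$, giving a pattern with $|P|=d$ and $M(P)_{i,k}>0$ for some $i\in C$. Substituting yields $g_i(n+d)\ge\const g_k(n)$ directly at the right length, and Remark~\ref{rem:same-component} converts $g_i$ to $g_k$. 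You instead insist on $M(P)_{k,k}>0$ (closed walk through $k$), which forces a specific length $L$, and then repair the length via iteration plus Proposition~\ref{prop:g_k-not-very-different}. Either trade-off works; the paper's version is marginally shorter because it avoids the iteration step, while yours avoids appealing to Remark~\ref{rem:same-component}.
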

\begin{proof}
	By the condition of the component of $k$, for every $d$, there exists a pattern $P=(T,\ell)$ with $|P|=d$ so that the associated matrix $M$ has $M_{i,k}>0$ for some $i$ in the component of $k$ (by Observation \ref{obs:pattern-for-distance}). Let $T^*$ be the tree associated with $g_k(n)$. We can see that the $i$-th entry $v_i$ of the vector associated with $P\oplus T^*$ is at least a constant times $g_k(n)$. The conclusion follows since $g_k(n+d)\ge\const g_i(n+d)\ge\const v_i\ge\const g_k(n)$ with the first inequality obtained from Remark \ref{rem:same-component}.
\end{proof}
The condition in Observation \ref{obs:bounded-distance} can be relaxed, but the current form is enough for later applications.

\begin{proposition}\label{prop:weak-submulti}
	Suppose $k$ is in a strongly self-dependent component. For any $m,n$, we have
	\[
		g_k(m+n)\le \const K^{\log m} g_k(m) g_k(n),
	\]
	where $K$ is a constant.
\end{proposition}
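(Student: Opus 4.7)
The plan is to proceed by strong induction on $m$, peeling off a subtree of size roughly $m/2$ from the optimal tree for $g_k(m+n)$ via Corollary~\ref{cor:extension-for-k}, and then re-gluing the residual piece back into $g_k(m)$ using the supermultiplicativity of Theorem~\ref{thm:upper-bound-almost-connected}. The one geometric ingredient I need is the folklore subtree-size lemma: in any rooted binary tree with $N$ leaves, for every integer $1\le p\le N$ there is a subtree whose number of leaves lies in $[p,2p)$. This follows by descending from the root always to the heavier child; since each step at most halves the leaf count, the strictly decreasing chain $N=c_0>c_1>\dots>c_L=1$ cannot skip over the interval $[p,2p)$.

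Let $T$ be the tree associated with $g_k(m+n)$ and apply the subtree lemma with $p=\lceil(m+1)/2\rceil$; this extracts a subtree of $q$ leaves with $\lceil(m+1)/2\rceil\le q\le m$. Write $m'=m-q\in[0,\lfloor m/2\rfloor]$. Corollary~\ref{cor:extension-for-k} then yields
\[
    g_k(m+n)\le \const\, g_k(q)\, g_k(m'+n).
\]
Assume first $m'\ge 1$; Theorem~\ref{thm:upper-bound-almost-connected} applied to the decomposition $m=q+m'$ gives $g_k(q)\, g_k(m')\le \const\, g_k(m)$, so that
\[
    g_k(m+n)\le \const\,\frac{g_k(m)\, g_k(m'+n)}{g_k(m')}.
\]
Now invoke the inductive hypothesis on $(m',n)$, which is admissible because $m'<m\le n$ implies $m'\le n$: it yields $g_k(m'+n)\le K^{\log m'} g_k(m')\, g_k(n)$, the factor $g_k(m')$ cancels, and one arrives at $g_k(m+n)\le \const\, K^{\log m'} g_k(m)\, g_k(n)$. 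Since $m'\le m/2$ gives $\log m'\le\log m-1$ (base~$2$), we have $K^{\log m'}\le K^{\log m}/K$, and choosing $K$ large enough to swallow the universal constants produced by Corollary~\ref{cor:extension-for-k} and Theorem~\ref{thm:upper-bound-almost-connected} closes the induction.

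The edge cases are routine. When $m'=0$, i.e., we were lucky enough to find a subtree of exactly $m$ leaves, Corollary~\ref{cor:extension-for-k} already gives $g_k(m+n)\le\const\, g_k(m)\, g_k(n)$, which is dominated by $K^{\log m} g_k(m)g_k(n)$ once $K$ is large and $m\ge 2$. For bounded $m$ the claim reduces, via Observation~\ref{obs:bounded-distance}, to the fact that $g_k(m+n)$ and $g_k(n)$ differ by only a constant factor, so these values are absorbed into the implicit leading constant of the statement. The main obstacle is really just the subtree-size lemma together with the verification that the one unit of slack $\log m-\log m'\ge 1$ is exactly what is needed to digest the two constants $1/c_0$ and $C$ coming from Theorem~\ref{thm:upper-bound-almost-connected} and Corollary~\ref{cor:extension-for-k}; the remainder is bookkeeping.
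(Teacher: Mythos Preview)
Your argument is correct and follows essentially the same route as the paper's: both peel a subtree of size between roughly $m/2$ and $m$ from the optimal tree for $g_k(m+n)$ via Corollary~\ref{cor:extension-for-k}, then re-absorb the pieces using the supermultiplicativity of Theorem~\ref{thm:upper-bound-almost-connected}, losing a constant factor at each of $O(\log m)$ steps. The only cosmetic difference is that you package the iteration as an explicit strong induction on $m$ with a cancellation of $g_k(m')$, whereas the paper unrolls the recursion into a product $g_k(m_0)\cdots g_k(m_t)\,g_k(n')$ and collapses it all at once at the end.
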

\begin{proof}
	Before proving, we observe that given any $d_0\ge 1/2$, for any tree $T$ of at least $d_0$ leaves, there is always a subtree of $d$ leaves so that $d_0\le d\le 2d_0$ (left as an exercise for the readers).

	Let $T$ be the tree associated with $g_k(m+n)$. By the observation at the beginning, there is a subtree $T_0$ of $m_0$ leaves so that $m/2 \le m_0 \le m$. That means $g_k(m+n)\le \const g_k(m_0) g_k(n+m-m_0)$ by Corollary \ref{cor:extension-for-k}.
    We continue the process with a subtree of $m_1$ leaves so that $\frac{m-m_0}{2}\le m_1\le m-m_0$ from the tree associated with $g_k(n+m-m_0)$, for which $g_k(n+m-m_0)\le\const g_k(m_1)g_k(n+m-m_0-m_1)$.
	Repeating this process some $t=O(\log m)$ times, we obtain
	\[
		g_k(m+n)\le {K_0}^{\log m} g_k(m_0)\dots g_k(m_t) g_k(n),
	\]
	where $K_0$ is a constant, $m_0+\dots+m_t=m$. 

	Using the assumption of the component $C$, that is $g_k(a+b)\ge\const g_k(a)g_k(b)$ for any $a,b$ by Theorem \ref{thm:upper-bound-almost-connected}, we have
	\[
		g_k(m+n)\le K^{\log m} g_k(m_0+\dots+m_t) g_k(n) = \const K^{\log m} g_k(m) g_k(n),
	\]
	where $K$ is a constant.
\end{proof}
\begin{remark}
    In fact, a similar technique is also used in proving an asymptotic lower bound on the number of polyominoes in \cite{bui2023asymptotic}.
\end{remark}

We are now ready to prove Theorem \ref{thm:lower-bound-connected-dependency}.
\begin{proof}[Proof of Theorem \ref{thm:lower-bound-connected-dependency}]
	For any pair of $m,n$ with $m\le n$, Proposition \ref{prop:weak-submulti} gives
	\[
		g_k(m+n)\le \const K^{\log m} g_k(m) g_k(n),
	\]
	which means
	\[
		K^{\log (m+n)} g_k(m+n) \le \alpha K^{\log m} g_k(m) K^{\log n} g_k(n),
	\]
	for some constant $\alpha$, because $m+n$ and $n$ are in a constant factor of each other (note that $m\le n$).

	Writing differently, $K^{\log n}=n^{\log K}=n^r$ for $r=\log K$, and multiplying both sides of the inequality by $\alpha$, we have
	\[
		\alpha (m+n)^r g_k(m+n) \le \alpha m^r g_k(m) \alpha n^r g_k(n).
	\]

	Applying Fekete's lemma for the submultiplicative sequence $\alpha n^r g_k(n)$, we have
	\[
		\lambda_k=\inf_n \sqrt[n]{\alpha n^r g_k(n)},
	\]
	which means
	\[
		g_k(n)\ge\const n^{-r} {\lambda_k}^n.\qedhere
	\]
\end{proof}

\subsection{Weakly self-dependent components}
Let us start with the key step stating that the tree associated with $g_k(n)$ for $k$ in a weakly self-dependent component cannot be too arbitrary.
\begin{proposition}\label{prop:bounded-branch}
	Let $k$ be in a weakly self-dependent component. Let $T$ be a tree of $n$ leaves with the associated vector $w$. If the numbers of leaves in both branches of $T$ are unbounded, then $g_k(n)/w_k$ is unbounded.
\end{proposition}

Before proving the proposition, we give a lemma, which already appeared in an implicit (and slightly weaker) form in the proof of \cite[Theorem 1]{bui2021growth}. 
\begin{lemma} \label{lem:a-pattern-with-large-value}
	Consider a vertex $i$ such that every $j$ of a lower component than the component of $i$ has $\lambda_j<\lambda_i$. For each $\epsilon>0$, for any $d$ large enough, there exists a pattern of $m$ leaves so that $d\le m\le 2d+O(1)$ and the associated matrix $M$ has $M_{i,i}\ge\const (\lambda_i - \epsilon)^m$.
\end{lemma}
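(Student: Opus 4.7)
The plan is to apply the supremum formula of Corollary \ref{cor:new-formula} to the $C$-subsystem, where $C$ is the component of $i$. By Remark \ref{rem:same-component}, $\lambda_i$ equals the growth rate of that subsystem, so Corollary \ref{cor:new-formula} yields $\lambda_i = \sup_P \max_{i'} \sqrt[|P|]{M(P)_{i',i'}}$, with $P$ ranging over patterns in the $C$-subsystem and $i'$ over its dimensions. I would first observe that this supremum is effectively witnessed by some $i' \in C$: for $i'$ in any lower component $C' < C$, restricting to the $C'$-subsystem and invoking Theorem \ref{thm:upper-bound-by-theta} gives $M(P)_{i',i'} \le \const g_{i'}(|P|+1) \le \const |P|^{r'} \lambda_{i'}^{|P|}$, which is dominated by $(\lambda_i - \epsilon/2)^{|P|}$ once $|P|$ is large, since $\lambda_{i'} < \lambda_i$ by hypothesis.

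Hence, for the given $\epsilon > 0$, I would fix a single pattern $P_0$ of some size $m_0$ and some $i' \in C$ satisfying $M(P_0)_{i',i'} \ge (\lambda_i - \epsilon/2)^{m_0}$. To obtain patterns of all sufficiently large sizes, I would self-concatenate: the $k$-fold concatenation $P_0^{\oplus k}$ has size $k m_0$ and satisfies $M(P_0^{\oplus k})_{i',i'} \ge (M(P_0)_{i',i'})^k \ge (\lambda_i - \epsilon/2)^{k m_0}$. Choosing $k = \lceil d/m_0 \rceil$ places $k m_0$ into $[d, d + m_0] \subseteq [d, 2d]$ as soon as $d \ge m_0$.

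Finally, to convert a large $(i',i')$-entry into a large $(i,i)$-entry, I would use Observation \ref{obs:pattern-for-distance}: since $i$ and $i'$ both lie in $C$, there exist patterns $P_1, P_2$ of bounded sizes with $M(P_1)_{i,i'} > 0$ and $M(P_2)_{i',i} > 0$. Setting $P = P_1 \oplus P_0^{\oplus k} \oplus P_2$, its size $m$ lies in $[d, 2d + O(1)]$, and
\[
    M(P)_{i,i} \ge M(P_1)_{i,i'}\, M(P_0^{\oplus k})_{i',i'}\, M(P_2)_{i',i} \ge \const (\lambda_i - \epsilon/2)^{m - O(1)}.
\]
Because $(\lambda_i - \epsilon/2)/(\lambda_i - \epsilon) > 1$, the ratio $((\lambda_i - \epsilon/2)/(\lambda_i - \epsilon))^m$ eventually dominates any fixed constant, so the bound $(\lambda_i - \epsilon)^m$ is valid once $d$ is large enough. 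The main obstacle is the size-calibration step: Corollary \ref{cor:new-formula} only furnishes a single good pattern at an unspecified fixed length, so the only source of flexibility in the target size is the self-concatenation $P_0^{\oplus k}$, and the width of the interval $[d, 2d + O(1)]$ is exactly what is needed to absorb both the rounding error modulo $m_0$ and the bounded lengths of the bridge patterns $P_1$ and $P_2$.
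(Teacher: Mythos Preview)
Your argument is correct and takes a genuinely different route from the paper's. The paper proves Lemma~\ref{lem:a-pattern-with-large-value} by dissecting the optimal tree for $g_i(n)$: it picks a subtree of size between $n/3$ and $2n/3$, factors the associated pattern as $P_1\oplus P_2$, and argues that the intermediate index $k$ in the product $M_1M_2$ must lie in $C$ (otherwise the lower growth rate would contradict optimality), which yields a pattern $P_1$ with a large $(i,k)$-entry; a short bridge then closes the loop to $(i,i)$. You instead invoke Corollary~\ref{cor:new-formula} on the $C$-subsystem to get a single seed pattern $P_0$ with large diagonal at some $i'\in C$, self-concatenate to hit the right size window, and bridge $i\to i'\to i$. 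The paper itself flags your approach in the Remark following Lemma~\ref{lem:a-pattern-with-large-value}, noting that Corollary~\ref{cor:new-formula} together with supermultiplicativity of $\sup_{|P|=n}M(P)_{i,i}$ gives an even stronger statement. Your route is cleaner and more modular; the paper's route is more self-contained in that it does not lean on the full strength of Corollary~\ref{cor:new-formula}.

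One small point worth tightening: your justification that the supremum is witnessed by some $i'\in C$ uses Theorem~\ref{thm:upper-bound-by-theta} to bound $M(P)_{i',i'}$ for $i'\notin C$, but that bound only beats $(\lambda_i-\epsilon/2)^{|P|}$ for large $|P|$, leaving small patterns unaddressed. The slicker argument is to apply Corollary~\ref{cor:new-formula} directly to the $C'$-subsystem for each $C'<C$: this gives $\sqrt[|P|]{M(P)_{i',i'}}\le\lambda_{i'}$ for \emph{every} $P$, so the supremum over $i'\notin C$ is at most $\max_{C'<C}\lambda_{C'}<\lambda_i$, forcing the full supremum $\lambda_i$ to be realized within $C$. (Alternatively, your bound combined with self-concatenation of any offending small pattern also closes the gap.) You may also want to note that without loss of generality $\epsilon$ is small enough that $\lambda_i-\epsilon/2>\max_{C'<C}\lambda_{C'}$, since the conclusion for smaller $\epsilon$ implies it for larger $\epsilon$.
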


\begin{proof}
	Let us pick some $\epsilon^*>0$ and consider some $n$ large enough so that $(\lambda_k-\epsilon^*)^m < g_k(m) < (\lambda_k+\epsilon^*)^m$ for every $k$ and for every $m\ge n/3$. Let $T$ be the tree associated with $g_i(n)$. Take a subtree $T_0$ of $m$ leaves so that $n/3\le m\le 2n/3$. Consider the decomposition $T=P'\oplus T_0$. Let $M'$ be the matrix associated with $P'$.

	Let $u$ be the vector associated with $T_0$. Since $g_i(n)=(M'u)_i=\sum_j M'_{i,j} u_j$, there exists some $k$ so that 
	\[
		M'_{i,k} u_k\ge\const g_i(n).
	\]

	The vertex $k$ cannot be in a component lower than $i$ when $n$ is large enough and $\epsilon^*$ is small enough, since $u_k \le g_k(m)\le  (\lambda_k+\epsilon^*)^m$ is then too small, which makes
	\[
		M'_{i,k} \ge \const\frac{g_i(n)}{u_k} \ge \const\frac{(\lambda_i-\epsilon^*)^n}{(\lambda_k+\epsilon^*)^m}
	\]
	not bounded by any constant times $g_i(n-m+1)\le(\lambda_i+\epsilon^*)^{n-m+1}$. 

	If $k$ is in the same component as $i$, we have $\lambda_k=\lambda_i$, hence 
	\[
		M'_{i,k}\ge\const \frac{(\lambda_i-\epsilon^*)^n}{(\lambda_i+\epsilon)^m}.
	\]

	For any $\epsilon>0$, there exists an $\epsilon^*$ small enough and an $n$ large enough so that $M'_{i,k}\ge (\lambda_i-\epsilon)^{n-m}$. As $k$ is in the same component as $i$, we can extend the pattern $P'$ to a new pattern $P^*=P'\oplus \bar P$ where $\bar P$ is a pattern so that $M(\bar P)_{k,i}>0$ and $|\bar P|$ is the distance from $k$ to $i$ (by Observation \ref{obs:pattern-for-distance}). The number of leaves $m^*$ in the new pattern is at most bounded larger, therefore, the associated matrix $M^*$ has $(M^*)_{i,i}\ge\const M'_{i,k}\ge \const (\lambda_i-\epsilon)^{m^*}$.

	The conclusion follows, by choosing an $\epsilon^*$ small enough and $n=\lceil 3d\rceil$.
\end{proof}

\begin{remark}
	We can deduce a stronger result than Lemma \ref{lem:a-pattern-with-large-value} from Corollary \ref{cor:new-formula}: Since the sequence $\{\sup_{P:|P|=n} M(P)_{i,i}\}_n$ for each $i$ is supermultiplicative, it follows that the subsequence of all positive elements converges to a limit (by a variant of Fekete's lemma for nonnegative sequences in \cite{bui2023bound}). When $i$ satisfies the condition of Lemma \ref{lem:a-pattern-with-large-value}, the limit is $\lambda_i$. Details of deductions are left to the readers as an exercise.
\end{remark}

Now we prove Proposition \ref{prop:bounded-branch}.
\begin{proof}[Proof of Proposition \ref{prop:bounded-branch}]
    Suppose the contrary that the numbers of leaves of both branches are unbounded but $g_k(n)/w_k$ is bounded.
    
	Let $u,v$ be the vectors associated with the left and the right branches. We can assume that the number $m$ of leaves in the right branch is smaller. Choose some $\epsilon>0$ small enough, we suppose $m$ is large enough so that for every $m'\ge m/3 - O(1)$, we have $(\lambda_i-\epsilon)^{m'}<g_i(m')<(\lambda_i+\epsilon)^{m'}$ for every $i$. (One can later see in the proof what exactly $O(1)$ is.)

	Let $w$ be the vector associated with $T$.
	Since $w_k = \sum_{i,j} c^{(k)}_{i,j} u_i v_j$, we have
    \[
        w_k\le\const u_i v_j
    \]
    for some $i,j$.

	By Corollary \ref{cor:one-same-one-lower}, let $i^*, j^*$ be a pair so that $c^{(k)}_{i^*,j^*}>0$ with one of $i^*,j^*$ in $C$, say $i^*\in C$, where $C$ is the component of $k$. 
	Consider the tree $T^*$ where the left branch is the tree associated with $g_{i^*}(n-1)$ and the right branch is just a leaf.
	It follows that the associated vector $w^*$ to $T$ has $w^*_k\ge\const g_{i^*}(n-1)\ge\const (\lambda_k-\epsilon)^n$.

	Back to the tree $T$, we have
	\[
		u_iv_j\le\const g_i(n-m) g_j(m)\le\const (\lambda_i+\epsilon)^{n-m} (\lambda_j+\epsilon)^m.
	\]
	 It follows that $i\in C$ (hence $j\notin C$) when $\epsilon$ is small enough and $n$ is large enough, since otherwise, $u_iv_j$ is smaller than $(\lambda_k-\epsilon)^n\le \const w^*_k$ by an unbounded number of times.

	We now prove that $m$ being large enough raises a contradiction to the boundedness of $g_k(n)/w_k$, which finishes the proof.

	By Lemma \ref{lem:a-pattern-with-large-value}, there exists a pattern $P^*$ of $m^*$ leaves so that $m/3\le m^* \le 2m/3 + O(1)$ and $(M^*)_{i,i} \ge \const (\lambda_i-\epsilon)^{m^*}$ for the associated matrix $M^*$.

	Denoting $m'=m-m^*$, we proceed with transforming the original tree. As $m'\ge m/3-O(1)$, we have $g_j(m')\ge (\lambda_j-\epsilon)^{m'}$. Also, we have $g_j(m)\le(\lambda_j+\epsilon)^m$.
	At first, we replace the right branch by the tree associated with $g_j(m')$. After that, we replace the original left branch $\mathcal L$ by $P^*\oplus\mathcal L$. The new tree has the $k$-th entry of the associated vector at least
	\[
		c^{(k)}_{i,j} (M^*)_{i,i} u_i g_j(m') \ge \const (\lambda_i - \epsilon)^{m^*} u_i (\lambda_j - \epsilon)^{m'},
	\]
	which is greater than $w_k$ an unbounded number of times when $\epsilon$ is small enough and $m$ is large enough. That is because $w_k$ is actually bounded by a constant times
	\[
		u_i v_j\le u_i g_j(m)\le u_i (\lambda_j+\epsilon)^m,
	\]
	and the unboundedness of
	\[
		\frac{(\lambda_i - \epsilon)^{m^*} u_i (\lambda_j - \epsilon)^{m'}}{u_i (\lambda_j+\epsilon)^m} = \left(\frac{\lambda_i - \epsilon}{\lambda_j+\epsilon}\right)^{m^*} \left(\frac{\lambda_j-\epsilon}{\lambda_j+\epsilon}\right)^{m'}
	\]
	is due to $m^*$ and $m'$ in a constant factor of each other and the ratio between $(\lambda_i-\epsilon)/(\lambda_j+\epsilon)$ and $(\lambda_j+\epsilon)/(\lambda_j-\epsilon)$. 
\end{proof}

\begin{corollary} \label{cor:big-subtree-small-branch}
	Suppose $k$ is in a weakly self-dependent component. Let $T$ be the tree associated with $g_k(n)$. Then every subtree of $T$ with at least $n/2$ leaves has a branch with a bounded number of leaves.
\end{corollary}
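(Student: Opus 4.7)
The plan is to argue by contradiction: assume that for every $B$ there is some $n$ and a subtree $T'$ of the tree $T$ for $g_k(n)$ with $m \geq n/2$ leaves and both branches exceeding $B$ leaves, then show a contradiction for $B$ large. The key idea is to regard the surrounding of $T'$ in $T$ as a pattern: let $P = (T,\ell)$ collapse $T'$ to a single marked leaf $\ell$ and set $M = M(P)$; writing $u'$ for the root vector of $T'$, the root of $T$ is $Mu'$, so $g_k(n) = (Mu')_k = \sum_{k'} M_{k,k'} u'_{k'}$. By pigeonhole over the $d$ coordinates, fix $k^*$ with $M_{k,k^*} u'_{k^*} \geq g_k(n)/d$. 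Since $M_{k,k^*}>0$, Observation \ref{obs:pattern-for-distance} gives a directed path $k\to k^*$, so $k^*$ lies either in the component $C$ of $k$ or in a strictly lower component.

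In the first subcase, $k^*$ itself belongs to the half-self-dependent component $C$, so Proposition \ref{prop:bounded-branch} applies to $T'$ at coordinate $k^*$: once the branches of $T'$ are large enough, there is a same-size transform $T''$ of $T'$ whose root vector $u''$ satisfies $u''_{k^*} \geq (d+1)\,u'_{k^*}$. Splicing $T''$ back into $T$ in place of $T'$ produces a tree of $n$ leaves whose root $k$-th entry is at least $M_{k,k^*}\,u''_{k^*} \geq \tfrac{d+1}{d}\,g_k(n) > g_k(n)$, contradicting optimality of $T$. In the second subcase $\lambda_{k^*} < \lambda_k$ by half-self-dependence. Applying Corollary \ref{cor:upper-bound} to the $C$-subsystem and to the component of $k^*$ (combined with Proposition \ref{prop:in-greatest-component}) gives $M_{k,k^*} \leq \const g_k(n-m+1) \leq \const (n-m)^{r_1} \lambda_k^{n-m}$ and $u'_{k^*} \leq g_{k^*}(m) \leq \const m^{r_2} \lambda_{k^*}^m$; using $m \geq n/2$, the product is at most $\const n^{r_1+r_2} (\lambda_k\lambda_{k^*})^{n/2}$. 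Since $g_k(n) \geq (\lambda_k - \epsilon)^n$ for large $n$ by the very definition of $\lambda_k$, one may choose $\epsilon>0$ so that $(\lambda_k - \epsilon)^2 > \lambda_k\lambda_{k^*}$; this contradicts the pigeonhole lower bound $M_{k,k^*}u'_{k^*} \geq g_k(n)/d$ for all sufficiently large $n$.

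The principal obstacle is the first subcase: Proposition \ref{prop:bounded-branch} is phrased for the root of a tree, but here it must be applied to an interior subtree $T'$, and the coordinate $k^*$ to be improved is only identified a posteriori by pigeonhole. The argument nevertheless goes through because the inflation factor provided by that proposition is genuinely unbounded as the branches of $T'$ grow, so it can be arranged to exceed the fixed constant $d$. For $n$ below any given threshold the tree $T$ has bounded total size, so the branch bound is automatic and the corollary follows uniformly.
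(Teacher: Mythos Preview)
Your proof is correct and follows essentially the same route as the paper: collapse the large subtree to a marked leaf, pigeonhole a dominant coordinate $k^*$, and then split into the case $k^*\in C$ (where Proposition~\ref{prop:bounded-branch} yields an improvement of the optimal tree, contradiction) versus $k^*$ in a strictly lower component (where the polynomial-times-$\lambda_{k^*}^m$ bound from Corollary~\ref{cor:upper-bound} is too small against $g_k(n)\ge(\lambda_k-\epsilon)^n$). The only cosmetic differences are that you make the pigeonhole constant $d$ and the inflation factor $d+1$ explicit, and you coarsen $\lambda_k^{n-m}\lambda_{k^*}^m$ to $(\lambda_k\lambda_{k^*})^{n/2}$, which is harmless since $m\ge n/2$ and $\lambda_{k^*}<\lambda_k$.
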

Actually the same holds for every $cn$ with a fixed constant $0<c<1$. The constant $1/2$ is chosen for later usage.
\begin{proof}
	Let $T_0$ be a subtree of $m\ge n/2$ leaves. Consider the decomposition $T=P'\oplus T_0$. Let $M'$ be the matrix associated with $P'$. Let $v,u$ be the associated vectors with $T,T_0$, respectively. We have $v=M'u$. It follows that $v_k=(M'u)_k=\sum_j M'_{k,j} u_j$. It means for some $j$,
	\[
		v_k\le\const M'_{k,j} u_j.
	\]

	Applying Theorem \ref{thm:upper-bound} to the $C'$-subsystem with the support of Proposition \ref{prop:in-greatest-component} for the component $C'$ of $j$, we have $u_j\le g_j(m)\le \const m^{r_1}{\lambda_j}^m$ for some $r_1$.
	Also, we have $M'_{k,j}\le\const g_k(n-m+1)\le\const (n-m+1)^{r_2}{\lambda_k}^{n-m+1}$ for some $r_2$ (as the tree of $P'$ has $n-m+1$ leaves). In total,
    \[
        v_k\le\const m^{r_1} (n-m+1)^{r_2} {\lambda_j}^{m} {\lambda_k}^{n-m+1}.
    \]

	Suppose $j$ is not in the component of $k$, that is $\lambda_j < \lambda_k$. For some $\epsilon>0$ small enough, any $n$ large enough satisfies $g_k(n)\ge(\lambda_k-\epsilon)^n$. However, any $m$ large enough would make $v_k$ less than $(\lambda_k-\epsilon)^n$, contradiction.

	Therefore, both $j,k$ are in the same component. By Proposition \ref{prop:bounded-branch}, the entry $u_j$ can be increased by an unbounded number of times by a transform if both of the branches of $T_0$ are large enough. The conclusion follows, due to the maximality of $g_k(n)$.
\end{proof}

We now give a bound of $g_k(n)$ for $k$ in a weakly self-dependent component.
\begin{theorem} \label{thm:submulti-for-half-self-dependency}
	Given a vertex $k$ in a weakly self-dependent component, then the sequence $g_k(n)$ is weakly submultiplicative. It follows that $g_k(n)\ge\const{\lambda_k}^n$.
\end{theorem}
\begin{proof}
	Let $T$ be the tree associated to $g_k(n)$.
	By Corollary \ref{cor:big-subtree-small-branch}, there exist a leaf $\ell$ and a decomposition $P=P_1\oplus\dots\oplus P_{t-1}\oplus P_t$ for $P=(T,\ell)$ so that each $|P_i|$ for $i\ne t$ is bounded and the number of leaves in $P_t$ is at most $n/2$. The leaf $\ell$ is chosen by going from the root in the bigger branch at each step, and when we reach the first subtree of at most $n/2$ leaves, we let that subtree be the tree of $P_t$ and assign an arbitrary leaf in that subtree to $\ell$.

	If $P=P'\oplus P''$, by Corollary \ref{cor:extension-for-k} and Observation \ref{obs:bounded-distance}, we have 
	\[
		g_k(|P|)\le\const g_k(|P'|) g_k(|P''|).
	\]

	By the decomposition $P=P_1\oplus\dots\oplus P_{t-1}\oplus P_t$, for each $m\ge n/2$, there are $P', P''$ so that $P=P'\oplus P''$, and $|P'|-(n-m)$ and $|P''|-m$ are bounded. It follows from Observation \ref{obs:bounded-distance} that
	\[
		g_k(n)\le\const g_k(|P|)\le\const g_k(|P'|) g_k(|P''|)\le\const g_k(n-m) g_k(m).
	\]

	Let the final constant be $K$, we have $Kg_k(n)\le K g_k(n-m) Kg_k(m)$, i.e. the sequence $\{Kg_k(n)\}_n$ is submultiplicative. That is $\lambda_k=\inf_n \sqrt[n]{Kg_k(n)}$. The conclusion follows.
\end{proof}

\subsection{Conclusion for all components}
Now Theorem \ref{thm:lower-bound} is clear.
\begin{proof}[Proof of Theorem \ref{thm:lower-bound}]
	Consider a minimal component $C$ so that $\lambda_k=\lambda$ for $k\in C$. The minimality means that every component $C'$ lower than $C$ has $\lambda_{k'}<\lambda$ for $k'\in C'$. In other words, the component $C$ cannot be a non-self-dependent component. By the coverage of the components in Proposition \ref{prop:classes-of-components}, the component $C$ is either a strongly self-dependent component or a weakly self-dependent component. It follows from Theorem \ref{thm:lower-bound-connected-dependency} and Theorem \ref{thm:submulti-for-half-self-dependency} that $g_k(n)\ge\const n^{-r}{\lambda_k}^n$ for some $r$. The conclusion in Theorem \ref{thm:lower-bound} follows. 
\end{proof}

\bibliographystyle{unsrt}
\bibliography{gbm2}

\end{document}